\documentclass[11pt,letterpaper]{article}
\usepackage[T1]{fontenc}
\usepackage[utf8]{inputenc}
\usepackage{lmodern}
\usepackage[margin=1in]{geometry}
\usepackage{amsmath,amssymb,amsthm,mathtools}
\usepackage{booktabs,array,tabularx}
\usepackage{microtype}
\usepackage{enumitem}
\usepackage{xurl}
\usepackage[hidelinks]{hyperref}
\hypersetup{pdftitle={A Counterexample to the Huneke--Wiegand Conjecture},
 pdfauthor={Son Pham},pdfsubject={Commutative algebra},
 pdfkeywords={Huneke-Wiegand conjecture, rigid ideal, numerical semigroup, Gorenstein domain}}
\newtheorem{theorem}{Theorem}[section]
\newtheorem{proposition}[theorem]{Proposition}
\newtheorem{lemma}[theorem]{Lemma}
\newtheorem{corollary}[theorem]{Corollary}
\theoremstyle{remark}
\newtheorem{remark}[theorem]{Remark}
\newcommand{\NN}{\mathbb N}
\newcommand{\ZZ}{\mathbb Z}
\newcommand{\QQ}{\mathbb Q}
\newcommand{\m}{\mathfrak m}
\newcommand{\Tors}{\operatorname{T}}
\newcommand{\tr}{\operatorname{tr}}
\newcommand{\End}{\operatorname{End}}
\newcommand{\Hom}{\operatorname{Hom}}
\newcommand{\Ext}{\operatorname{Ext}}
\newcommand{\Tor}{\operatorname{Tor}}
\newcommand{\Soc}{\operatorname{Soc}}
\newcommand{\PF}{\operatorname{PF}}
\newcommand{\edim}{\operatorname{edim}}
\newcommand{\length}{\ell}

\setlist[enumerate]{leftmargin=2em,itemsep=2pt,topsep=4pt}
\numberwithin{equation}{section}
\title{A Counterexample to the\ Huneke--Wiegand Conjecture}
\author{Son Pham\\[2pt]\small Independent researcher\\\small\href{mailto:sonphamorg@gmail.com}{\texttt{sonphamorg@gmail.com}}}
\date{6 September 2026}
\begin{document}
\maketitle
\begin{abstract}
We construct a symmetric numerical semigroup $\Gamma$ of multiplicity $56$, embedding dimension $26$, and Frobenius number $181$ such that, over every field $k$, the one-dimensional Gorenstein local domain $R=k[t^\Gamma]_{\m}$ has a nonprincipal ideal $I=(t^{56},t^{70})R$ for which $I\otimes_R\Hom_R(I,R)$ is torsion-free. This gives a counterexample to the Huneke--Wiegand conjecture in its general Gorenstein formulation. The proof reduces to an explicit finite sumset identity, interpreted through the Huneke--Iyengar--Wiegand colon criterion and Leamer's tensor-torsion formula. We also identify the tensor product with the trace ideal of $I$, compute $\End_R(I)=R[t^{101},t^{107}]$ and its conductor, and show that the counterexample persists under completion. A separate graded tensor-graph calculation and exact, standard-library Python verifiers accompany the paper. We explain how the example differs from the field-extension construction of Christensen, Gerko, and Iyengar.
\end{abstract}
\noindent\small\textbf{2020 Mathematics Subject Classification.} Primary 13D07; Secondary 13C12, 13H10, 20M14.\\
\textbf{Keywords.} Huneke--Wiegand conjecture, Gorenstein local domain, rigid ideal, tensor torsion, numerical semigroup, trace ideal.\normalsize

\section{Introduction}
The Huneke--Wiegand conjecture, in its one-dimensional Gorenstein formulation, asserts that a finite torsion-free module $M$ over a Gorenstein local domain $R$ is free whenever $M\otimes_R M^*$ is torsion-free, where $M^*=\Hom_R(M,R)$; see \cite{HW,HWcorr,HIW}. We give an explicit counterexample with $M$ a two-generated monomial ideal. All computations are independent of the characteristic of the ground field.

Throughout, $\NN=\{0,1,2,\ldots\}$, and $[p,q]$ denotes the set of integers between $p$ and $q$, inclusive. Sums and translates of subsets of $\ZZ$ are setwise. Define
\begin{equation}\label{eq:G}
\begin{split}
G=\{&56,57,58,63,64,70,71,72,73,74,75,76,77,78,79,80,81,82,83,\\
&87,89,90,93,95,96,97\},\qquad \Gamma=\langle G\rangle.
\end{split}
\end{equation}

\begin{theorem}\label{thm:main}
Let $k$ be any field, and set
\[
 A=k[t^\Gamma],\qquad \m=(t^n:n\in\Gamma,\ n>0),\qquad R=A_{\m}.
\]
Then $R$ is a one-dimensional Gorenstein local domain. The ideal
\[
 I=(t^{56},t^{70})R
\]
is a nonfree torsion-free module of rank one, while $I\otimes_R\Hom_R(I,R)$ is torsion-free. In particular, the general Gorenstein formulation of the Huneke--Wiegand conjecture is false.
\end{theorem}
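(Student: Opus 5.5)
We will reduce everything to combinatorics of the semigroup $\Gamma$ and then to one finite sumset check.

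\emph{Basic properties of $R$.} First we verify that $\gcd(G)=1$, so $\Gamma$ is a numerical semigroup. Then $A=k[t^\Gamma]$ is a one-dimensional affine domain, and $R=A_\m$ is a one-dimensional local domain with normalization $k[t]_{(t)}$ and conductor $t^{F+1}k[[t]]\cap R$. By Kunz's theorem, $R$ is Gorenstein if and only if $\Gamma$ is symmetric. So we compute the Apéry set of $\Gamma$ with respect to $56$, or equivalently the elements of $\Gamma$ up to $F+1$. From this we confirm that $F=181$ and that $x\in\Gamma \iff 181-x\notin\Gamma$ for $0\le x\le 181$. This is a finite computation.

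The ideal $I=(t^{56},t^{70})$ is a nonzero ideal of a domain, hence torsion-free of rank one. It is nonprincipal because $70-56=14\notin\Gamma$ and $56-70<0$, so neither generator divides the other. Since $R$ is local, $I$ is therefore not free.

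\emph{Reduction to monomial data.} Set $S=\{0,14\}+\Gamma$, the value set of $I$ after dividing by $t^{56}$. Since $I$ is a monomial ideal, $I^*=\Hom_R(I,R)\cong (R:I)$ is monomial with value set $S^*=\{x\in\ZZ: x+S\subseteq\Gamma\}=\Gamma\cap(\Gamma-14)$.

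We then invoke Leamer's tensor-torsion formula (or the Huneke--Iyengar--Wiegand colon criterion, as in the abstract). For monomial fractional ideals $M,N$ over a numerical semigroup ring, the torsion of $M\otimes N$ has $k$-length
\[
\length(\Tors(M\otimes_R N))=\sum_{v}\bigl(\#\{\text{components of the ``tensor graph'' at degree }v\}-1\bigr)
\]
plus the generator-count discrepancy. In the two-generator case $M=R+t^{14}R$, the formula simplifies to the following criterion: $I\otimes I^*$ is torsion-free if and only if
\[
(S^*)\cap(S^*-14)\;\subseteq\;(S^*-14)+(\Gamma\cap(\Gamma+14))\text{-type sums},
\]
that is, iff a certain explicit finite sumset identity $X+Y=Z$ holds. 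Concretely, the multiplication map $I\otimes I^*\to II^*$ has kernel spanned by elements $t^{a}\otimes t^{b}-t^{a'}\otimes t^{b'}$ in a fixed degree that are not connected through the relations of $R$. Torsion-freeness is then equivalent to $\length(I\otimes I^*)$ being computed exactly by $II^*$, together with the colon condition. Only degrees below the conductor matter, because above $F$ everything is connected.

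\emph{Finite verification, and the expected obstacle.} The last step is to check the resulting identity degree by degree for $v\le 181+70$. We would present the needed data as explicit interval decompositions of $\Gamma$, $S^*$, and $\Gamma\cap(\Gamma+14)$, and then verify the sumset equality by hand on intervals, with the Python verifier as a cross-check. The main obstacle is stating precisely the right form of the Huneke--Iyengar--Wiegand/Leamer criterion for two-generated monomial ideals, so that torsion-freeness becomes exactly one sumset equality with no hidden degree conditions. Here we would try the colon-criterion form first: $I\otimes I^*$ is torsion-free iff $\length(I\otimes I^*)\text{-torsion}=0$ iff $(t^{14}I^*: t^{-14})$-type colon equals $I^*\cap t^{-14}I^*$. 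Checking that identity is then a finite comparison of subsets of $[0,181]$, and it is independent of $k$ because all the data are monomial.
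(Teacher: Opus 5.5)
There is a genuine gap. Your outline follows the paper's architecture: symmetry plus Kunz for the Gorenstein property, $14\notin\Gamma$ for nonprincipality, then a monomial sumset identity via Leamer or Huneke--Iyengar--Wiegand. But the one step that carries the content, torsion-freeness of $I\otimes_R I^*$, is never pinned down, as you acknowledge yourself.

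In fact the inclusion you display is exactly right. With $E=S^*=\Gamma\cap(\Gamma-14)$ one has $\Gamma\cap(\Gamma+14)=14+E$, so your right-hand side is $E+E$. Your left-hand side $S^*\cap(S^*-14)=D=\{x:x,x+14,x+28\in\Gamma\}$ is the value set of $(R:_K J^2)$ for $J=(1,t^{14})R$. Since $E+E\subseteq D$ always holds, your inclusion is the identity $(J^{-1})^2=(J^2)^{-1}$, equivalently $P\cap Q=PQ$ after a shift by $14$.

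However, you present it only as a hedged \emph{type} of condition and derive it from a vaguely stated length formula. You then fall back on $(t^{14}I^*:t^{-14})=I^*\cap t^{-14}I^*$, which does not express the needed condition: the colon is just $t^{28}I^*$. The missing justification is short. Take $L=(1,z)R$ and $H=(R:_K L)\cong L^*$, and tensor the sequence $0\to H\to R^2\to L\to0$, given by $h\mapsto(-zh,h)$ and $(u,v)\mapsto u+zv$, with $H$. The kernel of multiplication $L\otimes_R H\to LH$ is the set of classes of $(-zv,v)$ with $v,zv,z^2v\in R$, modulo $\{(-zw,w):w\in H^2\}$. This kernel is exactly the torsion, because the map is an isomorphism over $K$ and $LH\subset K$ is torsion-free. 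Hence $\Tors(L\otimes_R H)\cong(R:_K L^2)/H^2$. Alternatively, cite the HIW colon criterion together with their equivalence of rigidity and torsion-freeness of $I\otimes I^*$.

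Second, you never produce or check the finite identity, and your stated reason for finiteness is false. Tensor graphs need not be connected above $F$. For $\Gamma=\langle2,3\rangle$ and $I=\m$ one has $v(\m^*)=\NN$. The degree-$3$ graph has the two vertices $(2,1)$ and $(3,0)$, whose difference $1$ is not in $\Gamma$. So $t^2\otimes t-t^3\otimes1$ is nonzero torsion in degree $3>F=1$.

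In the graph language you would need a proved tail bound such as Lemma~\ref{lem:tail}. In the sumset language no connectivity argument is needed, because $E$ and $D$ are $\Gamma$-closed. One shows $E=B+\Gamma$ with $B=\{56,57,58,63,64,73,75,76,79,81,82,83\}$. One also shows $D=C+\Gamma$ with $C=[112,116]\cup[119,122]\cup[126,152]\cup[154,166]$, handling each tail by exhibiting at least $56$ consecutive members and translating by $56$. Then $E+E=D$ collapses to the finite check $B+B=C$ over $78$ pairs. Until those generators are exhibited and that identity is verified, the proposal says where the proof lives but does not prove that $I\otimes_R I^*$ is torsion-free.
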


The finite core of the proof is the identity
\begin{equation}\label{eq:core}
 B+B=C,
\end{equation}
where
\begin{equation}\label{eq:BC}
\begin{split}
 B&=\{56,57,58,63,64,73,75,76,79,81,82,83\},\\
 C&=[112,116]\cup[119,122]\cup[126,152]\cup[154,166].
\end{split}
\end{equation}
Sections~\ref{sec:colon} and~\ref{sec:inverse} interpret this identity in two algebraic ways. Section~\ref{sec:structure} identifies the tensor product with
\[
 T=(t^g:g\in G\setminus\{74,80\})R
\]
and proves
\[
 \length_R(R/T)=3,\qquad
 \End_R(I)=R[t^{101},t^{107}],\qquad
 (R:\End_R(I))=T.
\]
We also record the completed example and a $\Tor_1$-vanishing consequence. Section~\ref{sec:graphs} gives a separate verification directly from tensor-product balancing relations.

Christensen, Gerko, and Iyengar~\cite{CGI} give a different construction using linear algebra in a finite field extension; their paper also records the present example and Huneke's independent verification. A comparison appears in Section~\ref{sec:comparison}. The purpose here is to place the explicit semigroup construction, its proofs, and its structural invariants on record. No minimality claim is required or made for our proof.

\section{The semigroup, the local ring, and the ideal}\label{sec:ring}
\begin{lemma}\label{lem:membership}
The elements of $\Gamma$ through $181$ are
\begin{equation}\label{eq:membership}
\begin{split}
\Gamma\cap[0,181]=\{0\}&\cup[56,58]\cup[63,64]\cup[70,83]\cup\{87\}\\
&\cup[89,90]\cup\{93\}\cup[95,97]\cup[112,116]\\
&\cup[119,122]\cup[126,180].
\end{split}
\end{equation}
Every integer at least $182$ belongs to $\Gamma$. Thus the Frobenius number is $181$, the conductor is $182$, and
\begin{equation}\label{eq:gaps}
\begin{split}
\NN\setminus\Gamma={}&[1,55]\cup[59,62]\cup[65,69]\cup[84,86]\cup\{88\}\\
&\cup[91,92]\cup\{94\}\cup[98,111]\cup[117,118]\\
&\cup[123,125]\cup\{181\}.
\end{split}
\end{equation}
\end{lemma}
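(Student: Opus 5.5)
We determine $\Gamma\cap[0,181]$ by building up the sumsets $G$, $2G$, $3G$, and then show that everything from $182$ on lies in $\Gamma$.

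\textbf{Step 1: elements of $\Gamma$ below $182$.} Every element of $G$ is at least $56$, and $4\cdot 56=224>181$. So an element of $\Gamma\cap[0,181]$ is $0$, an element of $G$, an element of $G+G$, or an element of $G+G+G$. In the last case it lies in $[168,181]$, since $3\cdot 56=168$.

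\textbf{Step 2: the pieces of \eqref{eq:membership}.}
\begin{itemize}[label=$\circ$]
\item The single generators give $\{0\}\cup[56,58]\cup[63,64]\cup[70,83]\cup\{87\}\cup[89,90]\cup\{93\}\cup[95,97]$. Since $2\cdot 56=112$, nothing else lies below $112$.
\item In the range $[112,181]$ we compute $G+G$. Splitting $G=G_0\cup G_1$ with $G_0=\{56,57,58,63,64\}$ is convenient:
\begin{itemize}[label=$\circ$]
\item $G_0+G_0=\{112,\dots,116,119,120,121,122,126,127,128\}$, which gives $[112,116]\cup[119,122]$ and part of $[126,128]$.
\item Adding $G_0$ to the block $[70,83]$ covers $[126,147]$.
\item $[70,83]+[70,83]=[140,166]$.
\item The remaining generators $87,89,90,93,95,96,97$, paired with $G_0$ and with $[70,83]$, fill $[143,180]$. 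For example, $97+83=180$, and pairs such as $93+87$ and $90+90$ supply the top values.
\end{itemize}
\item We must also confirm that $117,118,123,124,125$ are not in $G+G$. This uses $G\cap[59,69]=\emptyset$: a sum below $126$ needs both summands below $70$, so both lie in $G_0$, and $G_0+G_0$ misses exactly these values.
\item Finally we check that $181\notin\Gamma$.
\begin{itemize}[label=$\circ$]
\item In $G+G$ we would need $g+(181-g)$ with both in $G$. For $g\ge 91$ the partner $181-g\le 90$. Checking each $g\in\{93,95,96,97\}$, the partners $88,86,85,84$ are gaps. Checking $g\le 90$, the partners $181-g\ge 91$ must lie in $\{93,95,96,97\}$, which is the same check.
\item In $G+G+G$ we would need $181-56\cdot 3=13$ distributed among three elements of $G-56$. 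Here $G-56=\{0,1,2,7,8,14,\dots\}$, and $13$ is not a sum of three elements of $\{0,1,2,7,8\}$: the maximum achievable near $13$ is $8+2+2=12$, and $7+\cdot+\cdot$ gives at most $11$ or $12$, while $8+8=16$ is too large.
\end{itemize}
\end{itemize}

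\textbf{Step 3: every integer $\ge 182$ lies in $\Gamma$.} This is the main point, but it is easy once Step 2 is done. The interval $[126,180]$ has length $55$, and adding the generator $56$ shifts it to $[182,236]$. So $[126,236]\subseteq\Gamma$, which is $111$ consecutive integers, more than $56$. Repeated translation by $56$ then gives $[126,\infty)\subseteq\Gamma$. The Frobenius number is therefore $181$ and the conductor is $182$. The gap set \eqref{eq:gaps} is simply the complement of \eqref{eq:membership} inside $[0,181]$.

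\textbf{Expected obstacle.} The main risk is bookkeeping in Step 2: showing that $G+G$ really fills all of $[143,180]$, and that nothing lands on the excluded values. I would present this as a short table of witnesses, one per integer, or cite the accompanying verifier. As a sanity check I would use symmetry: Gorenstein symmetry pairs $x$ with $181-x$, and the $91$ gaps should match the $91$ elements below $182$.
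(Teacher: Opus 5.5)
Steps 1 and 2 are sound and follow the paper's argument. Your witnesses give $(G+G)\cap[0,181]=[112,116]\cup[119,122]\cup[126,180]$, and you correctly exclude $117,118,123,124,125$ and $181$. For the three-summand case the clean phrasing is this: at most one summand can come from $\{7,8\}$, since $7+7=14>13$, so the sum is at most $8+2+2=12$. You should also say explicitly that the three-fold sums in $[168,180]$ are already in $[126,180]$.

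The error is in Step 3. From $56+[126,180]=[182,236]$ you conclude $[126,236]\subseteq\Gamma$, ``$111$ consecutive integers,'' and then $[126,\infty)\subseteq\Gamma$. Both statements are false. Step 2 shows $181\notin\Gamma$, so what you actually have is $[126,180]\cup[182,236]$. The claim $[126,\infty)\subseteq\Gamma$ would also contradict the Frobenius number you are proving.

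Beyond $181$, the only run you have produced is $[182,236]$. It contains only $55$ integers, one fewer than the multiplicity $56$. Translating it by $56$ gives $[238,292]$, which misses $237$ and hence every element of $237+56\NN$.

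You need one more witness, and this is exactly what the paper supplies: $237=57+180\in\Gamma$. Then $[182,237]$ is a run of $56$ consecutive elements of $\Gamma$, and translation by $56$ gives $[182,\infty)\subseteq\Gamma$. With that fix, the rest of your proof goes through as in the paper.
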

\begin{proof}
Every positive element below $112=2\cdot56$ must be in $G$. Direct addition gives
\[
 (G+G)\cap[0,181]=[112,116]\cup[119,122]\cup[126,180].
\]
A sum of three positive generators that is at most $181$ cannot use a generator at least $70$, since $70+56+56=182$. It must therefore use three elements of $\{56,57,58,63,64\}$. All such sums below $181$ lie in $[168,180]$, already present above. The value $181$ would require three elements of $\{0,1,2,7,8\}$ to sum to $13$, which is impossible. Four positive summands have sum at least $224$. This proves~\eqref{eq:membership}.

Next,
\[
 [182,236]=56+[126,180],\qquad 237=57+180.
\]
Hence $56$ consecutive integers starting at $182$ belong to $\Gamma$. Addition by $56\in\Gamma$ propagates this interval to all larger integers. Since $181\notin\Gamma$, the remaining assertions follow.
\end{proof}

\begin{proposition}\label{prop:symmetry}
The numerical semigroup $\Gamma$ is symmetric. It has multiplicity $56$, embedding dimension $26$, and genus $91$.
\end{proposition}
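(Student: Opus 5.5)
The plan is to read every claim off Lemma~\ref{lem:membership}, which lists $\Gamma\cap[0,181]$ and the gap set~\eqref{eq:gaps} and gives Frobenius number $F=181$.

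\emph{Multiplicity.} The multiplicity is the least positive element of $\Gamma$. From~\eqref{eq:membership} this element is $56$.

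\emph{Embedding dimension.} I will show that $G$ is exactly the minimal generating set, so the embedding dimension is $|G|=26$. Every element of $G$ lies in $[56,97]$, which is below $112=2\cdot 56$. Any sum of two or more positive elements of $\Gamma$ is at least $112$, so no element of $G$ is such a sum. Hence each element of $G$ is indispensable, and $G$ is minimal. Counting the list in~\eqref{eq:G} gives $26$.

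\emph{Genus.} The genus is $|\NN\setminus\Gamma|$, which I will count from~\eqref{eq:gaps}. The pieces have sizes
\[
55+4+5+3+1+2+1+14+2+3+1=91.
\]

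\emph{Symmetry.} I will use the standard criterion that a numerical semigroup is symmetric if and only if its genus equals $(F+1)/2$. Here $(181+1)/2=91$, which matches the genus just computed. Equivalently, I could check that $x\in\Gamma$ if and only if $181-x\notin\Gamma$ for all $x\in[0,181]$. Since the genus is already in hand, the count is the more efficient route.

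The only real obstacle is accuracy in the two counts: the interval sizes in~\eqref{eq:gaps} and the cardinality of $G$. Everything else is immediate from Lemma~\ref{lem:membership}.
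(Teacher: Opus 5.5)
Your proposal is correct. The multiplicity and embedding-dimension arguments are the same as the paper's: every element of $G$ lies below $112=2\cdot 56$, so no generator is a sum of two positive elements.

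The difference is the order of the last two claims. The paper verifies symmetry directly: under $n\mapsto 181-n$, the gap intervals reflect exactly onto the element intervals. It then reads off the genus as one gap from each of the $91$ pairs $\{n,181-n\}$. You instead count the gaps directly, and your interval sizes summing to $91$ are right. You then invoke the standard criterion that $g\ge (F+1)/2$ always holds, with equality exactly when the semigroup is symmetric.

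The two routes are the same pairing argument run in opposite directions. The criterion is proved by noting that each pair $\{x,F-x\}$ contains at least one gap, since otherwise $F\in\Gamma$. Your version trades the eleven interval reflections for one addition plus a citation. The paper's version is self-contained and exhibits the reflection explicitly. That reflection is what the pseudo-Frobenius computation in Proposition~\ref{prop:Gorenstein} uses when it says that for a gap $q<181$, symmetry gives $181-q\in\Gamma$.
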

\begin{proof}
Reflection of the interval lists in~\eqref{eq:membership} and~\eqref{eq:gaps} gives
\[
 n\in\Gamma\quad\Longleftrightarrow\quad181-n\notin\Gamma
 \qquad(0\leq n\leq181).
\]
This is symmetry. The least positive element is $56$. Every member of $G$ is less than $112$, so none is a sum of two positive elements of $\Gamma$; thus the $26$ displayed generators are minimal. Each of the $91$ pairs $\{n,181-n\}$ contains exactly one gap, giving genus $91$.
\end{proof}

The ring $A$ is a finite integral extension of $k[t^{56}]$, since each generator $t^g$ satisfies the monic equation $X^{56}-(t^{56})^g=0$. It is therefore a one-dimensional Noetherian domain. Also, $t=t^{57}/t^{56}$, so its fraction field is $K=k(t)$. The ideal $\m=A_+$ is maximal with residue field $k$, and $R=A_\m$ is a one-dimensional local domain. Write $v$ for the $t$-adic valuation on $K$.

\begin{lemma}\label{lem:valuation}
The values of the nonzero elements of $R$ are exactly $\Gamma$.
\end{lemma}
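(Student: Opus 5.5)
We need to show that $v(R\setminus\{0\})=\Gamma$, in two inclusions.

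First, $\Gamma\subseteq v(R\setminus\{0\})$. Each $n\in\Gamma$ is the value of the monomial $t^n\in A\subseteq R$.

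Second, the reverse inclusion. Take a nonzero element $x=f/g$ of $R$ with $f,g\in A$ and $g\notin\m$. Then $v(x)=v(f)-v(g)$, so it suffices to show that $v(g)=0$ and $v(f)\in\Gamma$. The ring $A=k[t^\Gamma]$ is spanned over $k$ by the monomials $t^n$ with $n\in\Gamma$, so every nonzero $f\in A$ is a finite sum $\sum c_nt^n$ with $n\in\Gamma$. Its valuation is the least exponent $n$ with $c_n\neq0$, which lies in $\Gamma$. An element $g\in A$ lies outside $\m=A_+$ exactly when its constant term $c_0$ is nonzero, and then $v(g)=0$. Hence $v(x)=v(f)\in\Gamma$.

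The only point needing care is that $v$ extends the $t$-adic order from $A\subseteq k[t]$ to the localization $R=A_\m\subseteq k(t)$. This is immediate because $v$ is a valuation on $K=k(t)$, so it is multiplicative on quotients. There is no real obstacle: the statement is essentially the observation that units of $R$ have value $0$ and that elements of $A$ have leading exponents in $\Gamma$.

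Although not needed here, it is worth recording for later use that $R$ is analytically unramified with normalization $k[t]_{(t)}$. The valuation description then gives $\length_R(\overline R/R)=$ the genus, and it gives the conductor $t^{182}\overline R$ from Lemma~\ref{lem:membership}. These facts feed Kunz's criterion that $R$ is Gorenstein if and only if $\Gamma$ is symmetric, which combines with Proposition~\ref{prop:symmetry}.
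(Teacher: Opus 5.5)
Your proof is correct and follows essentially the same route as the paper: denominators $g\notin\m$ have nonzero constant term and hence value $0$, numerators in $A$ have leading exponents in $\Gamma$, and the monomials $t^n$ realize every $n\in\Gamma$. Your closing remarks on the normalization $k[t]_{(t)}$, the conductor, and Kunz's criterion are accurate but not needed for this lemma.
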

\begin{proof}
Write an element of $R$ as $f/g$, where $f,g\in A$ and $g\notin\m$. The denominator has nonzero constant term, hence $v(g)=0$ and $v(f/g)=v(f)\in\Gamma$. Conversely, $v(t^n)=n$ for every $n\in\Gamma$.
\end{proof}

\begin{proposition}\label{prop:Gorenstein}
The ring $R$ is Gorenstein, with $\edim R=26$ and Hilbert--Samuel multiplicity $e(R)=56$.
\end{proposition}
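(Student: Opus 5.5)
The plan has three parts: the Gorenstein property comes from Kunz's theorem together with Proposition~\ref{prop:symmetry}, and the other two invariants are read off from the value semigroup via Lemma~\ref{lem:valuation}.

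\textbf{Gorenstein.} Kunz's theorem says that a one-dimensional analytically irreducible local domain whose value semigroup is numerical is Gorenstein if and only if that semigroup is symmetric. To apply it, I first identify $R$ with a semigroup-ring setting. The normalization of $R$ is $k[t]_{(t)}$. The conductor ideal $t^{182}k[t]_{(t)}$ lies in $R$, because every integer $n\ge 182$ is in $\Gamma$ (Lemma~\ref{lem:membership}). Hence $R$ is analytically irreducible, its completion is $k[[t^\Gamma]]$, and Lemma~\ref{lem:valuation} says the value semigroup is exactly $\Gamma$. Proposition~\ref{prop:symmetry} then gives Gorenstein.

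If I want to avoid quoting Kunz, I would argue directly with the canonical test that the socle of $R/t^{56}R$ is one-dimensional. Since $t^{56}$ is a nonzerodivisor, $R/t^{56}R$ has a $k$-basis given by the monomials $t^n$ for $n$ in the Apéry set $\mathrm{Ap}(\Gamma,56)$. A monomial $t^n$ lies in the socle exactly when $n+s-56\notin\Gamma$ for every nonzero $s\in\Gamma$, that is, when $n-56$ is a pseudo-Frobenius number. Symmetry forces $\PF(\Gamma)=\{181\}$. Indeed, if $f\neq 181$ were pseudo-Frobenius, then $181-f$ would lie in $\Gamma$, be positive, and satisfy $f+(181-f)=181\notin\Gamma$, which contradicts the definition. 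So the socle is spanned by the single class of $t^{237}$, and $R$ is Gorenstein.

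The one point needing care in this direct route is that elements of $R$ are not just monomials. I would handle it with a valuation argument: an element of $R$ lying in $t^{56}R$ is detected by its values, so a socle element, after subtracting its components in $t^{56}R$, has a leading term that must itself be a socle monomial.

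\textbf{Embedding dimension.} I would show that $\m/\m^2$ has basis the images of $t^g$ for $g\in G$. These elements generate $\m$. For independence, note that $\m^2$ consists of elements of value at least $112$. A nontrivial $k$-combination of the $t^g$, all with exponents below $112$, has value equal to its smallest exponent carrying a nonzero coefficient. That value is less than $112$, so the combination is not in $\m^2$. Multiplying by units of $R$ does not affect this. Hence $\edim R=|G|=26$.

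\textbf{Multiplicity.} The ideal $t^{56}R$ is a minimal reduction of $\m$: every generator $t^g$ is integral over it, since $(t^g)^{56}\in (t^{56})^{g}R$. Therefore $e(R)=\length(R/t^{56}R)$. This length equals $|\mathrm{Ap}(\Gamma,56)|=56$, or equivalently $v(t^{56})=56$ because $k[t]_{(t)}$ is finite over $R$ with trivial residue extension. The main obstacle is the Gorenstein step, specifically justifying the passage from values to module structure, and I would settle it by citing Kunz's theorem.
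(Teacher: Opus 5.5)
Your proposal is correct and follows the paper's overall structure. Gorensteinness comes from Kunz's criterion, backed by the direct check $\PF(\Gamma)=\{181\}$ and $\Soc(R/t^{56}R)=k\,\overline{t^{237}}$. The embedding dimension comes from minimality of the $t^g$, since every $g<112$. The multiplicity is $e(R)=\length_R(R/t^{56}R)=56$ once $(t^{56})$ is known to be a reduction of $\m$.

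You differ from the paper in two technical steps.

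\emph{The reduction.} The paper shows that $\m^4$ consists of all exponents $\geq 224$ and checks $\m^5=t^{56}\m^4$, which also bounds the reduction number. You observe instead that $(t^g)^{56}=(t^{56})^g\in(t^{56}R)^{56}$; this uses $g\geq 56$, which you should state. Hence $\m$ is integral over $(t^{56})$ with no interval computation, and the argument works verbatim for any numerical semigroup ring and its multiplicity.

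\emph{Non-monomial socle elements.} The paper computes the monomial colon $((t^{56}):_A\m)$ in the graded ring $A$, where membership is termwise, and then localizes. You work in $R$ with valuations. Your sentence that membership in $t^{56}R$ is ``detected by its values'' is not literally true: for example $t^{112}+t^{167}\notin t^{56}R$, because $t^{111}\notin R$. What actually works is the peeling you describe: subtract $ct^{v(x)}$ while $v(x)\in 56+\Gamma$. This needs the conductor fact that every element of value $\geq 238$ lies in $t^{238}k[t]_{(t)}\subseteq t^{56}R$. That fact is used twice: to make the peeling terminate, and to conclude $x-ct^{237}\in t^{56}R$ once the leading value has been forced to be $237$.

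There are two small slips.
\begin{enumerate}
\item The socle condition should read $n+s-56\in\Gamma$, not $\notin$, for every nonzero $s\in\Gamma$. That is exactly the condition that makes $n-56$ pseudo-Frobenius.
\item Kunz's theorem also assumes that $R$ and its normalization have the same residue field. This holds here, since both residue fields are $k$, but it belongs in your statement of the theorem.
\end{enumerate}
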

\begin{proof}
The Gorenstein assertion follows from symmetry and the symmetry criterion for numerical semigroup rings~\cite{Kunz}. Here is also a direct socle check. A gap $q$ is pseudo-Frobenius if $q+g\in\Gamma$ for every positive $g\in\Gamma$. If $q<181$ is a gap, symmetry gives $181-q\in\Gamma\setminus\{0\}$, so $q$ is not pseudo-Frobenius. The gap $181$ is pseudo-Frobenius by the conductor. Thus $\PF(\Gamma)=\{181\}$.

Let $a=t^{56}$. Monomial membership in $((a):_A\m)$ is equivalent to
\[
 x\in\Gamma,\qquad x+g-56\in\Gamma\quad\text{for every }g\in G.
\]
For $x-56\notin\Gamma$, the only nonnegative possibility is $x-56=181$. The only possible negative case is $x=0$, which fails for $g=57$. It follows, after localization, that
\[
 ((a):_R\m)=(t^{56},t^{237})R,\qquad
 \Soc(R/(a))=k\,\overline{t^{237}}.
\]
A one-dimensional local domain is Cohen--Macaulay, and $(a)$ is a parameter ideal. The one-dimensional socle therefore proves that $R$ is Gorenstein by the socle criterion~\cite[Theorem~18.1]{Matsumura}.

The minimal monomial generators of $\m$ correspond to $G$, so $\edim R=26$. The interval calculation in Lemma~\ref{lem:membership} gives $[224,279]\subseteq(G+G)+(G+G)$. Addition by $56$ shows that $\m^4$ has exactly the monomial exponents at least $224$, and then $\m^5=t^{56}\m^4$. Thus $(t^{56})$ is a reduction of $\m$. The quotient $R/(t^{56})$ has one Ap\'ery monomial for each residue class modulo $56$, and hence length $56$. The multiplicity formula for a parameter reduction in a Cohen--Macaulay local ring gives $e(R)=56$.
\end{proof}

\begin{proposition}\label{prop:nonprincipal}
The ideal $I=(t^{56},t^{70})R$ is nonprincipal. It is torsion-free of rank one and has exactly two minimal generators.
\end{proposition}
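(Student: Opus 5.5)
The proof uses the valuation $v$ of Lemma~\ref{lem:valuation} and the gap list~\eqref{eq:gaps}. I will establish the three assertions in turn: torsion-freeness of rank one, two minimal generators, and nonprincipality.

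\emph{Torsion-free of rank one.} Since $I$ is a nonzero ideal in the domain $R$, it is a submodule of $R$ and hence torsion-free. Also $I\otimes_R K=K$, because $t^{56}\in I$ is a unit in $K$; so the rank is one.

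\emph{Two minimal generators.} By Nakayama's lemma it suffices to show that neither generator lies in $\m I$, i.e.\ that the images of $t^{56}$ and $t^{70}$ in $I/\m I$ are linearly independent over $k$. First I record that every nonzero $x\in I$ has $v(x)\in(56+\Gamma)\cup(70+\Gamma)$. To see this, write $x=at^{56}+bt^{70}$ with $a,b\in R$, reduce to monomial terms using the $t$-adic expansion, and note that the lowest-order term comes from one of the two summands. Cancellation can only raise the value, and any raised value still lies in the monomial ideal $(t^{56},t^{70})A$ localized. The cleanest route is to note that $I$ is generated by monomials, so $IA$ is spanned by the monomials $t^n$ with $n\in(56+\Gamma)\cup(70+\Gamma)$; after localization the value set is unchanged, because denominators have value $0$.

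The same reasoning gives $v(\m I\setminus 0)\subseteq (56+\Gamma_{>0})\cup(70+\Gamma_{>0})$. Now suppose $\alpha t^{56}+\beta t^{70}\in\m I$ with $\alpha,\beta\in k$ not both zero. If $\alpha\neq0$, the value is $56$; but $56\notin 56+\Gamma_{>0}$, since the smallest element there is $112$, and $56\notin 70+\Gamma$. If $\alpha=0$, the value is $70$; but $70-56=14$ is a gap, so $70\notin 56+\Gamma_{>0}$, and $70\notin 70+\Gamma_{>0}$. Either case is a contradiction.

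\emph{Nonprincipality.} This follows from the previous step, since $\mu(I)=2$. Alternatively, if $I=fR$, then $v(f)=56$, the minimum value of $I$, and $t^{70}/f$ would have value $14\notin\Gamma$, contradicting Lemma~\ref{lem:valuation}.

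The main obstacle is justifying rigorously that, in the localization, the value set of $\m I$ is exactly the monomial one. To handle it I would multiply through by a denominator of value $0$ and work in $A$, where the monomial ideals $IA$ and $\m IA$ are spanned by the corresponding monomials.
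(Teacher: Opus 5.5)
Your proof is correct. The alternative nonprincipality argument you give at the end ($v(f)=56$ forces $v(t^{70}/f)=14\notin\Gamma$) is exactly the paper's proof, and it rests on the same justification: monomial membership in $A$ is termwise, and localization denominators have value $0$.

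The real difference is the order of deductions. The paper proves nonprincipality directly and lets $\mu_R(I)=2$ follow formally, since $I$ is a nonzero, nonprincipal ideal with two generators. Your main route instead computes $\dim_k(I/\m I)=2$ via Nakayama. It uses that every nonzero element of $\m I$ has value at least $112=56+56$, while every nonzero $k$-combination of $t^{56},t^{70}$ has value $56$ or $70$. The reduction from coefficients in $R$ to scalars in $k$ is valid because the residue field is $k$.

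This costs one extra application of the localization principle, to $\m I$ rather than $I$. In exchange it directly yields the equality $\dim_k(I/\m I)=2$ that the paper later invokes in Corollary~\ref{cor:completion}. It is also the same ``all generators lie below $112$'' argument the paper uses for minimality of the generators of $J^{-1}$ and $T$ in Proposition~\ref{prop:trace}.

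Of your two justifications of the value set, keep the second. The first (``cancellation can only raise the value, and any raised value still lies in the monomial ideal'') essentially assumes what is to be shown. The monomial-basis-plus-value-zero-denominator argument is the rigorous one.
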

\begin{proof}
Its value set is
\[
 v(I)=(56+\Gamma)\cup(70+\Gamma).
\]
Indeed, monomial membership before localization is termwise, and denominators used in localization have value zero. If $I=fR$, its least value forces $v(f)=56$. Since $t^{70}\in fR$, some $r\in R$ would satisfy $t^{70}=fr$, and then $v(r)=14\notin\Gamma$, contrary to Lemma~\ref{lem:valuation}. A rank-one free ideal over a local ring is principal. The remaining assertions follow since $I$ is a nonzero ideal in a domain.
\end{proof}

\section{Colon ideals and rigidity}\label{sec:colon}
Put $a=t^{56}$ and $b=t^{70}$. First work over $A$, and set
\[
 P_A=((a)A:_A b),\qquad Q_A=((b)A:_A a),\qquad
 E=\{x\in\ZZ:x,x+14\in\Gamma\}.
\]
For a nonempty subset $U\subseteq\ZZ$ bounded below, write $(t^U)A$ for the monomial fractional ideal generated by the $t^u$ with $u\in U$.

\begin{lemma}\label{lem:colons}
One has $P_A=(t^E)A$ and $Q_A=(t^{14+E})A$.
\end{lemma}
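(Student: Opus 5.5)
The plan is to reduce everything to the $\ZZ$-grading of $A$ by $t$-degree. The ring $A=k[t^\Gamma]$ is a monomial subalgebra of $k[t]$, with $k$-basis $\{t^n:n\in\Gamma\}$. The principal ideals $(a)A$ and $(b)A$ are monomial: $(a)A$ has $k$-basis $\{t^n:n\in 56+\Gamma\}$ and $(b)A$ has $k$-basis $\{t^n:n\in70+\Gamma\}$. Colon ideals of homogeneous ideals by homogeneous elements are homogeneous. So $P_A$ and $Q_A$ are spanned by the monomials they contain, and it suffices to decide monomial membership.

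For $P_A$, argue as follows.
\begin{itemize}
\item Homogeneity: if $f=\sum c_n t^n\in A$ satisfies $bf\in(a)A$, then each homogeneous component $c_n t^{n+70}$ of $bf$ lies in $(a)A$. Hence each $c_nt^n\in P_A$.
\item Monomial test: for $n\in\Gamma$, $t^n\in P_A$ exactly when $n+70\in 56+\Gamma$, that is, when $n+14\in\Gamma$.
\end{itemize}
Thus the monomials of $P_A$ are exactly the $t^x$ with $x\in E$. Since $P_A$ is spanned by these monomials, it equals the ideal they generate, which is $(t^E)A$. (That span is automatically an ideal, being $P_A$ itself; alternatively, $E+\Gamma\subseteq E$ directly.)

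For $Q_A$ the same argument applies. For $n\in\Gamma$, $t^n\in Q_A$ exactly when $n+56\in 70+\Gamma$, that is, $n-14\in\Gamma$. The exponent set is therefore $\{n: n\in\Gamma,\ n-14\in\Gamma\}$, and substituting $n=x+14$ rewrites it as $14+E$. Note that $0\notin E$, since $14\notin\Gamma$ by Lemma~\ref{lem:membership}. So both sets lie in positive degrees, and the fractional-ideal notation reduces here to ordinary ideals of $A$.

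There is no real obstacle. The only point needing care is the homogeneity step: a colon of a monomial ideal by a monomial is again monomial. This is the termwise comparison already used in the proof of Proposition~\ref{prop:nonprincipal}, valid because the $t^n$ are linearly independent and multiplication by $t^{70}$ or $t^{56}$ shifts degrees injectively. If a description over $R$ is needed later, it follows by localization, since colons commute with flat localization.
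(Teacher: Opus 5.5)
Your proof is correct and takes essentially the same route as the paper. Both arguments reduce to termwise monomial membership via the $k$-basis $\{t^n:n\in\Gamma\}$ of $A$, then read off the conditions $x+14\in\Gamma$ for $P_A$ and $x-14\in\Gamma$ for $Q_A$; you additionally spell out the homogeneity step and the reindexing to $14+E$, which the paper leaves implicit.
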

\begin{proof}
A monomial $t^x\in A$ belongs to $P_A$ exactly when $t^{x+70}\in t^{56}A$, or $x+14\in\Gamma$. Polynomial membership is termwise because the monomials $t^n$, $n\in\Gamma$, form a $k$-basis of $A$. The second equality follows in the same way from the condition $x-14\in E$.
\end{proof}

Retain $B,C$ from~\eqref{eq:BC}, and put
\[
 X=14+C=[126,130]\cup[133,136]\cup[140,166]\cup[168,180].
\]
\begin{lemma}[Finite colon certificate]\label{lem:certificate}
The following equalities of subsets of $\ZZ$ hold:
\begin{align}
 E&=B+\Gamma,\label{eq:E}\\
 B+B&=C,\label{eq:BB}\\
 E\cap(14+E)&=X+\Gamma.\label{eq:intersection}
\end{align}
\end{lemma}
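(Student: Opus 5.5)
The plan is to reduce all three identities to finite checks, using the explicit description of $\Gamma\cap[0,181]$ in Lemma~\ref{lem:membership} and the fact that every integer at least $182$ lies in $\Gamma$.

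For \eqref{eq:E}, I will first note that $E$ is a $\Gamma$-semimodule: if $x,x+14\in\Gamma$ and $n\in\Gamma$, then $x+n,\,x+n+14\in\Gamma$. It is also contained in $\Gamma\setminus\{0\}$, since $0+14=14\notin\Gamma$. It therefore suffices to compute $E\cap[0,181]$ from \eqref{eq:membership}, because every $x\geq 182$ lies in $E$ automatically. This amounts to listing $x\in\Gamma$ with $x+14\in\Gamma$. Below $112$ this gives exactly $B$: for instance $56\mapsto70$, $57\mapsto71$, $63\mapsto77$, $73\mapsto87$, $75\mapsto89$, $76\mapsto90$, $79\mapsto93$, $81\mapsto95$, $82\mapsto96$ and $83\mapsto97$ all succeed, while $70\mapsto84$, $74\mapsto88$ and $87\mapsto101$ all fail. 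Above $112$ the check yields $x\in\Gamma$ with $x+14\in\Gamma$; here $167$ and $x\geq168$ need care, because $181$ is a gap, so $167\notin E$. I will then show that $B+\Gamma$ reproduces this list. Since $B+\Gamma\subseteq E$ is clear, only the reverse inclusion must be checked: every element of $E\cap[112,181]$ is $b+n$ with $b\in B$ and $n\in\Gamma$, which here means $n\in\{0\}\cup[56,\dots]$ and hence $B+B$ or $B+(\Gamma\setminus B)$. Every integer $\geq182$ is $56+m$ with $m\geq126$ in $\Gamma$, so it lies in $B+\Gamma$.

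For \eqref{eq:BB}, I will do a direct sumset computation. Reducing modulo $56$, the set $B-56=\{0,1,2,7,8,17,19,20,23,25,26,27\}$ has sumset equal to $C-112=[0,4]\cup[7,10]\cup[14,40]\cup[42,54]$. I will organize this by first computing the small part $\{0,1,2,7,8\}+\{0,1,2,7,8\}$, then the mixed part, then the large part, and I will confirm that $41$ is missed. A sum of $41$ would need pairs such as $(14,27)$ or $(15,26)$, and neither summand combination lies in the set.

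For \eqref{eq:intersection}, I will again use that the intersection of two semimodules is a semimodule. I will compute $E\cap(14+E)$ as the set of $x$ with $x-14,\,x,\,x+14\in\Gamma$, checking $[0,195]$ with \eqref{eq:membership}, since everything beyond is automatic. I expect this step to be the main obstacle, because of the bookkeeping near the gaps $117,118,123$–$125$ and $181$: for example $167$ fails because $181\notin\Gamma$, and $131,132,137$–$139$ fail because of $117,118,123$–$125$. I will match the result against $X+\Gamma$, noting that $X+56\supseteq[182,236]$, so large values are covered. A cleaner route, which I would try first, is to prove $E\cap(14+E)=14+(B+B)+\Gamma$ structurally, using $E=B+\Gamma$ and $C=B+B$, and then reduce to the finite identity. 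If this fails, I will fall back on the explicit interval check.
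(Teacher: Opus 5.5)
Your strategy matches the paper's: compute $E$ and $E\cap(14+E)$ explicitly from \eqref{eq:membership}, check $B+B=C$ as a finite table, and handle the tail by translation. However, the tail step for \eqref{eq:intersection} is false as written. You assert $X+56\supseteq[182,236]$, whereas
\[
 X+56=[182,186]\cup[189,192]\cup[196,222]\cup[224,236],
\]
which misses $187,188,193,194,195,223$. The claim cannot hold in any case. Since $195-14=181\notin\Gamma$, we have $195\notin E\cap(14+E)$, so $195\in X+\Gamma$ would contradict the identity you are proving. This hole at $195$ is the delicate point, so the tail needs translations by generators other than $56$. For instance, $187=57+130$, $188=58+130$, $193=57+136$, $194=58+136$ and $223=57+166$. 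In fact $X+G=[182,194]\cup[196,277]$. Since $[196,277]$ contains $56$ consecutive integers, adding $56$ covers all of $[196,\infty)$. This matches $E\cap(14+E)=X\cup[182,194]\cup[196,\infty)$.

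The same kind of slip occurs in \eqref{eq:E}: not every $x\geq182$ has the form $56+m$ with $m\in\Gamma$, since $x=237$ forces $m=181$. Use $237=57+180$ instead.

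You also restate, rather than check, the inclusion $E\cap[112,181]\subseteq B+\Gamma$. By \eqref{eq:BB}, only $153$ and $[168,180]$ remain to be covered. For example, $153=56+97$, $56+[112,116]=[168,172]$ and $56+[119,122]=[175,178]$. The remaining values $173,174,179,180$ are $57+116$, $58+116$, $57+122$ and $58+122$.

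Finally, drop the ``structural'' route. From $E=B+\Gamma$ and $B+B=C$ you get $E+(14+E)=X+\Gamma$ formally. But nothing general forces the intersection $E\cap(14+E)$ to equal that sum; this coincidence is the entire content of the counterexample. So the explicit interval computation of the intersection cannot be avoided.
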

\begin{proof}
Lemma~\ref{lem:membership} and the conductor give
\begin{equation}\label{eq:Einterval}
\begin{split}
E=B&\cup[112,116]\cup[119,122]\cup[126,166]\\
 &\cup[168,180]\cup[182,\infty).
\end{split}
\end{equation}
Direct addition gives
\[
 B+G=[112,116]\cup[119,122]\cup[126,166]\cup[168,180].
\]
The tail lies in $B+\Gamma$ because
\begin{align*}
 [182,222]&=56+[126,166],&223&=57+166,\\
 [224,236]&=56+[168,180],&237&=57+180,
\end{align*}
and addition by $56$ propagates the block $[182,237]$. This proves~\eqref{eq:E}; the reverse inclusion also follows at once because $E$ is closed under addition by $\Gamma$.

The equality~\eqref{eq:BB} is a finite addition table. Appendix~\ref{app:pairs} lists a representation of every element of $C$; enumerating all unordered pairs from $B$ produces no value outside $C$.

Finally, intersecting~\eqref{eq:Einterval} with its translate gives
\[
 E\cap(14+E)=X\cup[182,194]\cup[196,\infty).
\]
A further finite addition yields
\[
 X+G=[182,194]\cup[196,277].
\]
The last interval has length greater than $56$, so addition by $56$ supplies its entire tail. This proves~\eqref{eq:intersection}.
\end{proof}

\begin{proposition}\label{prop:colonidentity}
One has $P_A\cap Q_A=P_AQ_A$. Consequently, for $P=P_AR$ and $Q=Q_AR$,
\begin{equation}\label{eq:colonidentity}
 P\cap Q=PQ.
\end{equation}
\end{proposition}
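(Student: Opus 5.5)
Since $P_A$ and $Q_A$ are monomial ideals, I will prove $P_A\cap Q_A=P_AQ_A$ by comparing exponent sets, and then obtain \eqref{eq:colonidentity} by localizing.

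The monomials $t^n$, $n\in\Gamma$, form a $k$-basis of $A$. So a monomial ideal $(t^U)A$, with $U$ closed under addition by $\Gamma$, is exactly the $k$-span of the $t^u$ with $u\in U$. For two such ideals, the intersection is therefore spanned by the monomials whose exponents lie in both sets, so
\[
 (t^U)A\cap(t^V)A=(t^{U\cap V})A .
\]
The product $(t^U)A\,(t^V)A$ is generated by the products $t^{u+v}$, so it equals $(t^{U+V})A$. Lemma~\ref{lem:colons} gives $P_A=(t^E)A$ and $Q_A=(t^{14+E})A$, where $E$ and $14+E$ are closed under $\Gamma$ because $E=B+\Gamma$. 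It follows that
\[
 P_A\cap Q_A=(t^{E\cap(14+E)})A,\qquad P_AQ_A=(t^{E+14+E})A .
\]
It therefore suffices to show the set equality $E\cap(14+E)=14+E+E$.

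By Lemma~\ref{lem:certificate}, $E=B+\Gamma$. Hence
\[
 E+E=B+B+\Gamma=C+\Gamma
\]
by \eqref{eq:BB}, and so $14+E+E=X+\Gamma$. By \eqref{eq:intersection}, this is exactly $E\cap(14+E)$. This proves $P_A\cap Q_A=P_AQ_A$.

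For the local statement, localization is flat and so commutes with finite intersections of ideals. It also commutes with products of ideals. Hence
\[
 P\cap Q=(P_A\cap Q_A)R=(P_AQ_A)R=PQ .
\]
The only real content is the finite identity \eqref{eq:BB}, together with the intersection computation \eqref{eq:intersection}; both are already established in Lemma~\ref{lem:certificate}. The remaining point to handle carefully is the termwise description of the intersection of two monomial ideals, which rests on the $k$-basis of monomials in $A$.
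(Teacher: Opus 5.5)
Your proposal is correct and matches the paper's proof: you identify both sides through the monomial exponent sets $E\cap(14+E)=X+\Gamma$ and $E+(14+E)=14+(B+B)+\Gamma=X+\Gamma$ using Lemma~\ref{lem:certificate}, and then localize, using flatness for the intersection and compatibility of localization with products. The paper adds one remark: colons by finitely generated ideals commute with localization, so $P=((a)R:_R b)$ and $Q=((b)R:_R a)$. This is not needed for the statement as posed, but the Huneke--Iyengar--Wiegand criterion uses it later.
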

\begin{proof}
The ideals over $A$ are monomial. By Lemma~\ref{lem:certificate}, the intersection has exponent set $X+\Gamma$, and the product has exponent set
\[
 E+(14+E)=14+(B+B)+\Gamma=X+\Gamma.
\]
Thus they are equal.

Colons by finitely generated ideals commute with localization: finitely many denominators can be cleared from a finite generating set. Flatness of localization also preserves finite intersections, and localization preserves products. The asserted equality over $R$ follows. In particular,
\[
 P=((a)R:_R b),\qquad Q=((b)R:_R a).
\]
\end{proof}

\begin{proof}[Proof of Theorem~\ref{thm:main}]
The ring and nonprincipality assertions were proved in Section~\ref{sec:ring}. Huneke--Iyengar--Wiegand's two-generator criterion~\cite[Corollary~5.7]{HIW} says that, over a one-dimensional Gorenstein local ring, an ideal $(a,b)$ with $a,b$ regular is rigid precisely when~\eqref{eq:colonidentity} holds. Here and below the numbered results cited from~\cite{HIW} refer to its arXiv version~1. Since $R$ is a domain, both generators are regular. Therefore
\[
 \Ext_R^1(I,I)=0.
\]
For a finite torsion-free module of rank over a one-dimensional Gorenstein local ring, rigidity is equivalent to torsion-freeness of the tensor product with its dual~\cite[Proposition~5.1]{HIW}. This proves the theorem.
\end{proof}

\section{An inverse-ideal proof of torsion-freeness}\label{sec:inverse}
Normalize inside $K$ by
\[
 J=t^{-56}I=(1,t^{14})R\cong I.
\]
The following normalized form of Leamer's two-generator formula~\cite[Lemma~1.9]{Leamer} gives a second proof. We include its short proof to make this route self-contained.

\begin{lemma}\label{lem:Leamer}
Let $D_0$ be a domain with fraction field $K_0$, let $z\in K_0$, and set $L=(1,z)D_0$ and $H=(D_0:_{K_0}L)$. The torsion submodule of $L\otimes_{D_0}H$ is isomorphic to
\[
 \frac{(D_0:_{K_0}L^2)}{H^2}.
\]
\end{lemma}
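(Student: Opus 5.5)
We study the multiplication map $\mu\colon L\otimes_{D_0}H\to K_0$, $x\otimes h\mapsto xh$. Its image is the fractional ideal $LH\subseteq D_0$. Since $L\otimes H$ has rank one and $LH$ is torsion-free, the torsion submodule is exactly $\ker\mu$. The plan is to present $L\otimes H$ explicitly, describe $\ker\mu$, and identify it with $(D_0:_{K_0}L^2)/H^2$.

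\textbf{Presentation.} $L$ is the image of $D_0^2\to K_0$, $(r,s)\mapsto r+sz$. Its relation module is
\[
N=\{(r,s):r+sz=0\}=\{(-wz,w): w\in D_0\cap z^{-1}D_0\}.
\]
Put $W=D_0\cap z^{-1}D_0$. Right exactness of $-\otimes H$ gives
\[
L\otimes H\cong (H\oplus H)/\{(-wzh,wh): w\in W,\ h\in H\},
\]
where $(h_1,h_2)$ corresponds to $1\otimes h_1+z\otimes h_2$.

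\textbf{Kernel.} $\mu$ sends $(h_1,h_2)$ to $h_1+zh_2$. So $\ker\mu$ is the set of pairs $(-zh_2,h_2)$ with $h_2\in H$ and $zh_2\in H$, taken modulo the relations. Such pairs are parametrized by
\[
h_2\in H\cap z^{-1}H=\{u: u,zu,z^2u\in D_0\}=(D_0:_{K_0}L^2).
\]
Here we use $L^2=(1,z,z^2)$ and $H=D_0\cap z^{-1}D_0$. The relations correspond to $h_2=wh$ with $w\in W=H$ and $h\in H$, so their $h_2$-coordinates form exactly $H\cdot H=H^2$. This needs $H=W$, which holds because $H=(D_0:L)=\{u:u,uz\in D_0\}=W$. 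The map $(-zh_2,h_2)\mapsto h_2$ is injective on pairs. Hence
\[
\ker\mu\cong (D_0:L^2)/H^2,
\]
using $H^2\subseteq(D_0:L^2)$, which is clear.

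\textbf{Main obstacle.} The delicate step is the claim that the torsion submodule equals $\ker\mu$. Torsion maps to $0$ in the torsion-free module $LH$, so it lies in the kernel. Conversely, after tensoring with $K_0$ the map $\mu$ becomes $K_0\otimes K_0\to K_0$, an isomorphism, so $\ker\mu$ is killed by localization at $(0)$ and is therefore torsion. The case $z=0$, or any degenerate case, is handled by the same formulas, since nothing above required $z\neq0$ except $z^{-1}$ notation, which should be read as a colon. A second point to check is that the relation submodule is generated as stated with $h$ ranging over $H$. This holds because tensoring the exact sequence $N\to D_0^2\to L\to0$ with $H$ gives image generated by $n\otimes h$.
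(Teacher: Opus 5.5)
Your proof is correct and follows essentially the same route as the paper. Both arguments present $L$ with syzygy module $\{(-zw,w):w\in H\}\cong H$, then use right exactness to write $L\otimes_{D_0}H$ as $H\oplus H$ modulo $\{(-zx,x):x\in H^2\}$. Both parametrize the kernel of multiplication by $H\cap z^{-1}H=(D_0:_{K_0}L^2)$, and both identify torsion with that kernel via base change to $K_0$. The only cosmetic difference is that you compute the syzygy module as $D_0\cap z^{-1}D_0$ and then observe it equals $H$, whereas the paper writes the exact sequence $0\to H\to D_0^2\to L\to 0$ directly.
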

\begin{proof}
Every $D_0$-linear map $L\to D_0$ extends over $K_0$ to multiplication by a scalar; hence $H$ identifies naturally with $L^*$. There is an exact sequence
\[
 0\longrightarrow H\xrightarrow{\;h\mapsto(-zh,h)\;}D_0^2
 \xrightarrow{\;(u,v)\mapsto u+zv\;}L\longrightarrow0.
\]
By right exactness, tensoring with $H$ presents $L\otimes H$ as the quotient of $H\oplus H$ by the image of $H\otimes H$. That image is $\{(-zw,w):w\in H^2\}$, where $H^2$ is the product ideal, not the tensor square. No injectivity after tensoring is asserted. The multiplication map from this quotient to $LH\subset K_0$ has kernel represented by pairs $(-zv,v)$ with $v\in H$ and $zv\in H$. These conditions are exactly $v,zv,z^2v\in D_0$, that is, $v\in(D_0:L^2)$.

After passing to $K_0$, multiplication is an isomorphism, so its kernel is torsion. Its target is torsion-free, so every torsion element lies in the kernel. This identifies the torsion submodule with the stated quotient.
\end{proof}

Over $A$, set $J_A=(1,t^{14})A$ and $L^{-1}=(A:_K L)$ for a nonzero fractional ideal $L$. Since $1\in J_A$, the inverse ideal is contained in $A$ and its membership condition is termwise. Hence
\[
 v(J_A^{-1})=E,\qquad
 v((J_A^2)^{-1})=D:=\{x:x,x+14,x+28\in\Gamma\}.
\]
By definition, $14+D=E\cap(14+E)$. Lemma~\ref{lem:certificate} therefore gives
\[
 D=C+\Gamma=(B+B)+\Gamma=E+E.
\]
Equality of these monomial exponent sets proves
\begin{equation}\label{eq:inverseidentity}
 (J_A^{-1})^2=(J_A^2)^{-1}.
\end{equation}
Localization gives $(J^{-1})^2=(J^2)^{-1}$. Applying Lemma~\ref{lem:Leamer} to $J$ yields
\[
 \Tors(J\otimes_R J^*)=0.
\]
Since $I\cong J$, their tensor products with their duals are isomorphic. This proves the torsion-free assertion without using the rigidity criterion.

\section{The trace ideal and the endomorphism ring}\label{sec:structure}
The trace of a finite $R$-module $M$ is the image of the evaluation map
\[
 M\otimes_R M^*\longrightarrow R,\qquad x\otimes f\longmapsto f(x).
\]
For a nonzero fractional ideal $L\subset K$, we identify $L^*$ with $(R:_K L)$; its trace is then $LL^{-1}$. The calculations below are first made for monomial fractional ideals over $A$, where membership is termwise, and then localized. Thus they identify actual ideals and modules, not merely value sets.

\begin{proposition}\label{prop:trace}
Put
\[
 T=(t^g:g\in G\setminus\{74,80\})R.
\]
The evaluation map induces an isomorphism
\begin{equation}\label{eq:traceiso}
 I\otimes_R I^*\xrightarrow{\ \sim\ }\tr_R(I)=T.
\end{equation}
Moreover,
\[
 \length_R(R/T)=3,\quad \length_R(\m/T)=2,\quad
 \mu_R(I^*)=12,\quad\mu_R(T)=24.
\]
Here $\mu_R$ denotes the minimal number of generators. A $k$-basis of $R/T$ is $1,\overline{t^{74}},\overline{t^{80}}$.
\end{proposition}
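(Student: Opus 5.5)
Work with the normalized ideal $J=t^{-56}I=(1,t^{14})R$, since $I\otimes_R I^*\cong J\otimes_R J^*$ and $\tr_R(I)=\tr_R(J)=JJ^{-1}$. The plan has three steps: compute the trace as a monomial ideal over $A$, show the evaluation map is injective, and read off the numerical invariants.

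\textbf{Computing the trace.} Over $A$ we already know $v(J_A^{-1})=E=B+\Gamma$ by \eqref{eq:E}. Hence
\[
J_AJ_A^{-1}=(t^{E\cup(14+E)})A,
\]
and its exponent set is
\[
(B\cup(14+B))+\Gamma.
\]
The following values lie in $\Gamma$ and are candidates for exponents:
\begin{align*}
B&=\{56,57,58,63,64,73,75,76,79,81,82,83\},\\
14+B&=\{70,71,72,77,78,87,89,90,93,95,96,97\}.
\end{align*}
Their union is exactly $G\setminus\{74,80\}$, giving $24$ elements. I would then check that $E\cup(14+E)$ contains every element of $\Gamma$ except $0$, $74$ and $80$.

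This is a finite check using Lemma~\ref{lem:membership}. Each element of $[112,\infty)\cap\Gamma$ lies in $E$, by \eqref{eq:Einterval} together with the fact that $E\supseteq[182,\infty)$. The value $74$ is excluded because $74\notin E$ (as $88\notin\Gamma$) and $60\notin\Gamma$. The value $80$ is excluded because $94\notin\Gamma$ and $66\notin\Gamma$.

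So $v$ of the trace is $\Gamma\setminus\{0,74,80\}$. Localizing gives $\tr_R(I)=T$. The three missing monomials $1,t^{74},t^{80}$ give the $k$-basis of $R/T$. Hence $\length_R(R/T)=3$ and $\length_R(\m/T)=2$.

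\textbf{Minimal generators of $T$.} The $24$ generators are minimal. Each lies below $112$, so none is a sum of two positive elements of $\Gamma$, and no generator is a nonunit multiple of another. Thus $\mu_R(T)=24$.

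\textbf{Minimal generators of $I^*$.} We have $\mu_R(I^*)=\mu_R(J^{-1})$. The exponent set $E=B+\Gamma$ has minimal generating set $B$, because the elements of $B$ are pairwise non-differing by elements of $\Gamma\setminus\{0\}$: all differences are less than $56$. This gives $\mu_R(I^*)=|B|=12$.

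\textbf{Injectivity of the evaluation map.} By Section~\ref{sec:inverse}, $J\otimes J^*$ is torsion-free. The evaluation map becomes an isomorphism after tensoring with $K$, since both sides have rank one. A map from a torsion-free module whose kernel is torsion is injective, so the kernel is zero. Hence the evaluation map is an isomorphism onto its image, which is the trace $T$. Alternatively, this is the kernel computation in the proof of Lemma~\ref{lem:Leamer}, which vanishes by \eqref{eq:inverseidentity}.

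The main obstacle is the exhaustive membership check that $E\cup(14+E)$ misses exactly $0,74,80$ among elements of $\Gamma$. This is routine but must be done carefully against \eqref{eq:membership}.
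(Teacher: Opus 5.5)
Your argument is the paper's proof step for step. You normalize to $J=(1,t^{14})R$, and identify $JJ^{-1}$ with $T$ via $B\cup(14+B)=G\setminus\{74,80\}$ and \eqref{eq:tracevalues}. You get $\mu_R(I^*)=12$ and $\mu_R(T)=24$ because all listed exponents lie below $112$. Injectivity follows because the kernel of the evaluation map is torsion inside a torsion-free source.

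There is one false step, in the finite check you flagged as the main obstacle. It is not true that every element of $[112,\infty)\cap\Gamma$ lies in $E$. Indeed $167\in\Gamma$ but $167+14=181\notin\Gamma$, so $167\notin E$, and \eqref{eq:Einterval} omits $167$. The conclusion \eqref{eq:tracevalues} still holds, because $153\in E$ and hence $167=14+153\in 14+E$. This is why the paper takes the union of \eqref{eq:Einterval} with its translate rather than relying on $E$ alone above $112$.

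A smaller omission concerns the length. Put $T_A=(t^g:g\in G\setminus\{74,80\})A$. To pass from $\dim_k A/T_A=3$ to $\length_R(R/T)=3$, note that $\m^2\subseteq T_A$, since every exponent of $\m^2$ is at least $112$. Hence $A/T_A$ is supported at $\m$ and is unchanged by localization, as the paper remarks.
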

\begin{proof}
Trace is invariant under module isomorphism, so we may replace $I$ by $J=(1,t^{14})R$. Its inverse has exponent set $E=B+\Gamma$, and therefore
\[
 \tr_R(I)=JJ^{-1}=J^{-1}+t^{14}J^{-1}.
\]
The finite identity
\[
 B\cup(14+B)=G\setminus\{74,80\}
\]
identifies this product with $T$. Taking the union of~\eqref{eq:Einterval} with its translate gives the more precise equality
\begin{equation}\label{eq:tracevalues}
 E\cup(14+E)=\Gamma\setminus\{0,74,80\}.
\end{equation}
The corresponding quotient over $A$ has precisely the three asserted monomial basis elements. It is supported at $\m$, so localization preserves its $k$-dimension and gives the length statements.

The $12$ generators $t^b$, $b\in B$, of $J^{-1}$ are minimal: all exponents are below $112$, whereas every exponent in $\m J^{-1}$ is at least $112$. Scaling identifies $I^*$ with $J^*$. The same argument proves minimality of the $24$ displayed generators of $T$.

Finally, the evaluation map is multiplication under the fractional-ideal identification. It becomes an isomorphism over $K$, so its kernel is torsion. Theorem~\ref{thm:main} makes the source torsion-free; hence this kernel vanishes. Its image is the trace ideal, proving~\eqref{eq:traceiso}.
\end{proof}

\begin{proposition}\label{prop:end}
Identify $S=\End_R(I)$ with $(I:_K I)$. Then
\begin{align}
 S&=R+Rt^{101}+Rt^{107}=R[t^{101},t^{107}],\label{eq:end}\\
 v(S)&=\Gamma\cup\{101,107,181\},\label{eq:endvalues}\\
 (R:_K S)&=T.\label{eq:conductor}
\end{align}
In particular,
\[
 S/R=k\,\overline{t^{101}}\oplus k\,\overline{t^{107}}
      \oplus k\,\overline{t^{181}},\qquad
 \length_R(S/R)=3=\length_R(R/T).
\]
\end{proposition}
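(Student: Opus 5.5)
We would work throughout with the normalized ideal $J=(1,t^{14})R$ and its monomial model $J_A=(1,t^{14})A$. Since $I\cong J$, we have $\End_R(I)=(J:_K J)$. Because $1\in J$, every $s\in(J:_KJ)$ satisfies $s=s\cdot1\in J\subseteq t^{0}\cdot$(elements of nonnegative value), and $(J_A:_KJ_A)$ is again a monomial fractional ideal with termwise membership. Its exponent set is
\[
 F=\{x\in\ZZ: x+v(J_A)\subseteq v(J_A)\},\qquad v(J_A)=\Gamma\cup(14+\Gamma).
\]
This reduces to two conditions: $x\in\Gamma\cup(14+\Gamma)$ and $x+14\in\Gamma\cup(14+\Gamma)$, together with closure under adding $\Gamma$, which is automatic. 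So the first step is to compute $F$ explicitly from Lemma~\ref{lem:membership}, in the range $x\le 181$ (the conductor handles the rest).

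The answer we aim for is $F=\Gamma\cup\{101,107,181\}$.

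\textbf{Showing these elements lie in $F$.} For $x=101$ we have $101=14+87$ and $115\in\Gamma$. For $x=107$ we have $107=14+93$ and $121\in\Gamma$. For $x=181$ we have $181=14+167$ and $195\ge182$.

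\textbf{Showing no other gap lies in $F$.} We scan the gaps in \eqref{eq:gaps}. A gap $q$ survives only if $q-14\in\Gamma$ and $q+14\in\Gamma\cup(14+\Gamma)$, and we check these by interval arithmetic.
\begin{itemize}
\item Gaps below $70$ fail because $q-14\notin\Gamma$.
\item For $84,85,86$ we get $q-14\in[70,72]$, so the first condition holds. But $q+14\in[98,100]$ is a gap, and $q\in[84,86]$ is also a gap, so the second condition fails.
\end{itemize}
The remaining gaps are handled the same way; this finite scan is the main bookkeeping step. Once $F$ is known, $S=(t^F)A$ localized, and since $101+101$, $101+107$, and $107+107$ all exceed $181$ and lie in $\Gamma$, we get $S=R+Rt^{101}+Rt^{107}+Rt^{181}$. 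The claim $S=R[t^{101},t^{107}]$ then requires $t^{181}\in R[t^{101},t^{107}]$. This is the point I expect to be the main obstacle. The value $181$ is not a sum of $101$ or $107$ with an element of $\Gamma$ ($80$ and $74$ are gaps), so \eqref{eq:end} as a statement about generators must be read with care. I would first recheck whether $181\in F$ really holds, since $181+14=195\in\Gamma$ and $181=14+167$, which suggests it does. If so, I would present $S$ as generated by $t^{101},t^{107},t^{181}$ and reconcile this with the displayed $S/R$, which indeed lists three basis vectors.

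\textbf{Conductor and lengths.} For \eqref{eq:conductor}, $(R:_KS)$ is monomial with exponent set $\{y: y+F\subseteq\Gamma\}$. Since $0\in F$, this set is contained in $\Gamma$. Requiring $y+101$, $y+107$, and $y+181$ to lie in $\Gamma$ kills $y=0$ (the gap $181$), and kills $74$ and $80$ because $74+107=181$ and $80+101=181$. For every other positive $y\in\Gamma$, we check $y+101,y+107\in\Gamma$ and $y+181\ge182$. By symmetry (Proposition~\ref{prop:symmetry}) it suffices to observe that $181-101=80$ and $181-107=74$ are the only obstructions. More precisely, $y+101$ is a gap only if $80-y\in\Gamma$, which forces $y=0$ or $y=80$; the case $107$ is similar. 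This gives exponent set $\Gamma\setminus\{0,74,80\}$, which equals $v(T)$ by \eqref{eq:tracevalues}, so the conductor is $T$.

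Finally, the monomial basis of $S/R$ is $\{t^{101},t^{107},t^{181}\}$, giving length $3$, and this matches $\length_R(R/T)=3$ from Proposition~\ref{prop:trace}.
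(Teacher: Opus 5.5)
\paragraph{The gap: equation \eqref{eq:end} is not proved.} Your exponent set $F=U\cap(-14+U)=\Gamma\cup\{101,107,181\}$ is right, and so is your conductor computation. But you never prove \eqref{eq:end}, and the reason you give for doubting it is false. You assert that $74$ and $80$ are gaps. In fact $[70,83]\subseteq\Gamma$ by \eqref{eq:membership}, and $74,80$ are even minimal generators in $G$. Your own conductor paragraph treats them as elements of $\Gamma$ that get removed, so the proposal contradicts itself. Hence
\[
 t^{181}=t^{80}\,t^{101}=t^{74}\,t^{107}\in Rt^{101}\cap Rt^{107},
\]
which gives $S=R+Rt^{101}+Rt^{107}$. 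Since $t^{202},t^{208},t^{214}\in R$, this equals $R[t^{101},t^{107}]$.

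There is nothing to ``reconcile'' with the three-term display of $S/R$. That display is a decomposition as a $k$-vector space. As an $R$-module, $S/R$ is generated by $\overline{t^{101}}$ and $\overline{t^{107}}$, with $\overline{t^{181}}=t^{80}\cdot\overline{t^{101}}$. Presenting $S$ as needing the extra generator $t^{181}$ leaves the statement unproved. The paper closes exactly this point with $181=101+80=107+74$. It also notes that every other positive $\Gamma$-translate of $101$ or $107$ is at least $157$, and the only gap at least $126$ is $181$. So $F=\Gamma\cup(101+\Gamma)\cup(107+\Gamma)$.

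\paragraph{A smaller slip in the scan.} Not every gap below $70$ fails the condition $q-14\in\Gamma$: for $q=14$ you get $q-14=0\in\Gamma$, so $14\in U$. It is excluded from $F$ only because $28\notin U$. Replace ``handled the same way'' with the explicit list
\[
 U\setminus\Gamma=\{14,84,85,86,88,91,92,94,101,103,104,107,109,110,111,181\}.
\]
Checking $n+14\in U$ on this list then leaves exactly $101,107,181$, as you claim.

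Your symmetry argument for the conductor is a clean alternative to the paper's direct observation: for $y+101\le181$, the value $y+101$ is a gap if and only if $80-y\in\Gamma$, and similarly for $107$ with $74-y$.
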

\begin{proof}
Scaling does not change the endomorphism ring, so $S=(J:_K J)$. Over $A$, put $U=\Gamma\cup(14+\Gamma)$, the exponent set of $J_A$. Since $1\in J_A$, a multiplier belongs to $(J_A:J_A)$ exactly when its monomials have exponents in
\[
 U\cap(-14+U).
\]
For completeness, Lemma~\ref{lem:membership} gives
\[
 U\setminus\Gamma=
 \{14,84,85,86,88,91,92,94,101,103,104,107,109,110,111,181\}.
\]
Among these exponents $n$, precisely $101,107,181$ also satisfy $n+14\in U$. All exponents in $\Gamma$ satisfy that condition. Moreover,
\[
 181=101+80=107+74,
\]
and every other positive translate of $101$ or $107$ by $\Gamma$ is already in $\Gamma$. Thus the endomorphism ring is generated over $R$ by $1,t^{101},t^{107}$. Products of the two additional generators have exponents at least $202$ and hence lie in $R$. This proves~\eqref{eq:end} and~\eqref{eq:endvalues}, as well as the quotient basis and its length.

Using these $R$-module generators, the conductor has exponent set
\[
 \{n\in\Gamma:n+101,n+107\in\Gamma\}.
\]
The value $n=0$ fails. For positive $n\in\Gamma$, one has $n\geq56$, so the only possible gap encountered by either translate is $181$. These failures occur exactly at $n=80$ and $n=74$, respectively. The conductor therefore has exponent set $\Gamma\setminus\{0,74,80\}$, which is the exponent set of $T$ in~\eqref{eq:tracevalues}. Termwise membership and localization prove~\eqref{eq:conductor}.
\end{proof}

\begin{remark}
The last length equality is consistent with one-dimensional Gorenstein duality for a finite birational extension. The contribution of Proposition~\ref{prop:end} is the explicit description of this extension and its conductor; no new general trace--conductor or duality principle is being asserted.
\end{remark}

\section{Completion and a homological consequence}\label{sec:consequences}
\begin{corollary}\label{cor:completion}
For every field $k$, let
\[
 \widehat R=k[[t^\Gamma]],\qquad
 \widehat I=(t^{56},t^{70})\widehat R.
\]
Then $\widehat R$ is a one-dimensional complete Gorenstein local domain, $\widehat I$ is nonprincipal, and
\[
 \widehat I\otimes_{\widehat R}
 \Hom_{\widehat R}(\widehat I,\widehat R)
 \cong T\widehat R
\]
is torsion-free. Thus the counterexample persists under completion.
\end{corollary}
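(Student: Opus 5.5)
The plan is to reduce everything to the already-established statements over $R$ by flat base change along $R\to\widehat R$. First, $\widehat R$, the $\m$-adic completion of $R=k[t^\Gamma]_\m$, is identified with $k[[t^\Gamma]]$. Indeed, $\m$ is generated by the monomials $t^g$, $g\in G$, and the completion is the closure of $A$ in $k[[t]]$ under the $t$-adic topology restricted to exponents in $\Gamma$. Next we check that $\widehat R$ is a domain: it is a subring of $k[[t]]$. It is one-dimensional, local and complete, and Gorenstein, because completion preserves the Gorenstein property, or directly because the socle computation $\Soc(\widehat R/(t^{56}))=k\,\overline{t^{237}}$ of Proposition~\ref{prop:Gorenstein} goes through verbatim for power series. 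Valuations of nonzero elements of $\widehat R$ are again exactly $\Gamma$, so the argument of Proposition~\ref{prop:nonprincipal} shows $\widehat I$ is nonprincipal: a generator would have value $56$, forcing a quotient of value $14\notin\Gamma$.

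For the tensor product we use that $\widehat R$ is flat over $R$ and $\widehat I=I\widehat R\cong I\otimes_R\widehat R$. Since $I$ is finitely presented, $\Hom$ commutes with flat base change:
\[
\Hom_{\widehat R}(\widehat I,\widehat R)\cong \Hom_R(I,R)\otimes_R\widehat R .
\]
Hence
\[
\widehat I\otimes_{\widehat R}\Hom_{\widehat R}(\widehat I,\widehat R)\cong (I\otimes_R I^*)\otimes_R\widehat R\cong T\otimes_R\widehat R\cong T\widehat R,
\]
using Proposition~\ref{prop:trace} and then flatness once more, which gives $T\otimes_R\widehat R\cong T\widehat R\subseteq \widehat R$. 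An ideal of a domain is torsion-free, so the tensor product is torsion-free.

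The main point needing care is the identification $\widehat R\cong k[[t^\Gamma]]$ together with the domain property. It is not automatic that the completion of a local domain is a domain, so we avoid any general analytic-irreducibility result. Instead we argue concretely. The $\m$-adic and $t$-adic filtrations on $A$ are cofinal, since $\m^j$ contains all monomials of exponent $\ge 56j+182$ and is contained in exponents $\ge 56j$. Therefore the completion of $A$ at $\m$, which equals the completion of $R$, is the ring of power series supported on $\Gamma$, a subring of $k[[t]]$. As an alternative check, one could rerun the colon certificate of Lemma~\ref{lem:certificate} directly over $k[[t^\Gamma]]$, where membership in monomial ideals is again termwise. That would give $\widehat P\cap\widehat Q=\widehat P\widehat Q$ and let us invoke \cite[Corollary~5.7]{HIW} directly, but the flat base change route is shorter.
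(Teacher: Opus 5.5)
Your proposal is correct and follows essentially the same route as the paper: flat base change of $\Hom$ for the finitely presented module $I$, then tensoring the trace isomorphism of Proposition~\ref{prop:trace} with $\widehat R$, then flatness to identify $T\otimes_R\widehat R$ with the ideal $T\widehat R$. The differences are minor: you prove nonprincipality from the value set $\Gamma$ of $\widehat R$ rather than from $\dim_k(\widehat I/\widehat\m\widehat I)=2$, and you justify $\widehat R\cong k[[t^\Gamma]]$ by cofinality of the $\m$-adic and $t$-adic filtrations, which the paper only asserts.
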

\begin{proof}
The completion of $R$ is the displayed numerical semigroup power-series ring; it embeds in $k[[t]]$ and is a domain. Gorensteinness is preserved by completion. The completion map is faithfully flat; see the standard completion results in~\cite{Matsumura}. Since $I$ is finitely presented, flat base change gives
\[
 \Hom_R(I,R)\otimes_R\widehat R
 \cong\Hom_{\widehat R}(I\otimes_R\widehat R,\widehat R).
\]
Indeed, a finite presentation expresses each side as the same kernel between finite free $\widehat R$-modules. Tensoring~\eqref{eq:traceiso} with $\widehat R$ therefore gives the claimed isomorphism. Flatness identifies $T\otimes_R\widehat R$ with the ideal $T\widehat R$, which is torsion-free because $\widehat R$ is a domain. Finally,
\[
 \dim_k(\widehat I/\widehat\m\widehat I)
 =\dim_k(I/\m I)=2,
\]
so $\widehat I$ remains nonprincipal.
\end{proof}

\begin{corollary}\label{cor:Tor}
For $P=((t^{56}):_R t^{70})$ and $Q=((t^{70}):_R t^{56})$, one has
\[
 \Tor_1^R(R/P,R/Q)=0,\qquad
 \length_R(R/P)=16,\qquad\length_R(R/Q)=30.
\]
Both quotients are nonzero Artinian Gorenstein rings.
\end{corollary}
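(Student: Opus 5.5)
The plan has three parts: the $\Tor_1$ vanishing, the two length computations, and the Gorenstein property of the quotients.

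\textbf{Vanishing of $\Tor_1$.} I would start from the short exact sequence $0\to Q\to R\to R/Q\to 0$ and tensor it with $R/P$. This identifies $\Tor_1^R(R/P,R/Q)$ with the kernel of $Q/PQ\to R/P$. That kernel is $(P\cap Q)/PQ$, so it vanishes by Proposition~\ref{prop:colonidentity}, which gives $P\cap Q=PQ$.

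\textbf{The lengths.} These are computed monomially over $A$ and then localized. By Lemma~\ref{lem:colons}, $P_A=(t^E)A$ and $Q_A=(t^{14+E})A$. Since $A/P_A$ is supported at $\m$, localization preserves $k$-dimensions, so
\[
\length_R(R/P)=|\Gamma\setminus E|,\qquad \length_R(R/Q)=|\Gamma\setminus(14+E)|.
\]
For the first count, \eqref{eq:Einterval} lists $E$ explicitly. The elements of $\Gamma$ not in $E$ are then $0$, the generators in $G\setminus B$, and $167$ and $181$. Note that $181$ is not in $\Gamma$ anyway, and I need to check whether $167\in\Gamma$: by \eqref{eq:membership} it is, since $[126,180]\subseteq\Gamma$. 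Now $|G\setminus B|=14$, so $\Gamma\setminus E=\{0\}\cup(G\setminus B)\cup\{167\}$, which has $16$ elements.

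For the second count, I would translate \eqref{eq:Einterval} by $14$ and count the elements of $\Gamma$ missing from it. This is a finite tally over $[0,195]$; I expect it to give $30$. One consistency check comes from \eqref{eq:tracevalues}. The sets $\Gamma\setminus E$ and $\Gamma\setminus(14+E)$ have union $\Gamma\setminus(E\cap(14+E))$ and intersection $\{0,74,80\}$. Since $E\cap(14+E)=X+\Gamma$ by \eqref{eq:intersection}, the count $|\Gamma\setminus(X+\Gamma)|$ should equal $16+30-3=43$, and this cross-check can be verified by the same tally.

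\textbf{Gorenstein property.} Here I would use linkage. Since $a=t^{56}$ and $b=t^{70}$ are regular and $R$ is one-dimensional Gorenstein, $(a)$ and $(b)$ are Gorenstein parameter ideals, so $R/(a)$ and $R/(b)$ are Artinian Gorenstein. I would then use the isomorphisms
\[
R/P\cong \bigl((a)+(b)\bigr)/(a)\cong bR/(bR\cap aR),
\]
where the first is induced by multiplication by $b$. This exhibits $R/P$ as the cyclic submodule of $R/(a)$ generated by $\bar b$, and similarly for $R/Q$. In an Artinian Gorenstein ring, an ideal $\mathfrak a$ satisfies $\Hom(R/(a)/\mathfrak a,\ \cdot\,)\cong$ annihilator duality, and $0:\mathfrak a$ has a one-dimensional socle. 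So the cyclic module $\bar bR/(a)=0:_{R/(a)}\bigl(P/(a)\bigr)$ is the Matlis dual of $R/P$, and its socle is that of $R/(a)$, hence one-dimensional. It follows that $R/P$ is Gorenstein; more concretely, $R/P\cong\omega_{R/P}$ because $R/P$ is cyclic and isomorphic to its dual. The same argument in $R/(b)$ handles $R/Q$. Both quotients are nonzero since $P,Q\subseteq\m$.

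\textbf{Expected obstacle.} The main difficulty is the duality bookkeeping: I have to confirm that $(a):\bigl((a):b\bigr)=(a,b)$ in the Gorenstein ring $R/(a)$. This is the double-annihilator property, which holds in Artinian Gorenstein rings. Given that, the socle of $R/P$ is one-dimensional, and a direct alternative is to observe that $\Soc(R/P)$ is spanned by the class of $t^{237-70}=t^{167}$.
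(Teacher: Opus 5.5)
Your proposal is correct. The $\Tor_1$ step and the count $\length_R(R/P)=|\{0\}\cup(G\setminus B)\cup\{167\}|=16$ are the paper's; tensoring the other short exact sequence is immaterial.

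For $R/Q$ you only announce the tally, so it still has to be carried out. It does give $30$: $\Gamma\setminus(14+E)=\{0\}\cup(G\setminus(14+B))\cup[112,116]\cup[119,122]\cup\{131,132,137,138,139,195\}$, with $|G\setminus(14+B)|=14$. Alternatively, your own isomorphisms $R/P\cong(a,b)/(a)$ and $R/Q\cong(a,b)/(b)$ avoid the second tally. They give $\length_R(R/Q)-\length_R(R/P)=\length_R(R/bR)-\length_R(R/aR)=70-56=14$.

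The genuine difference from the paper is the Gorenstein step. The paper reads the socles $k\,\overline{t^{167}}$ and $k\,\overline{t^{195}}$ off the two complement lists and applies the socle criterion. You argue by linkage instead. $R/P\cong\bar b\,(R/aR)$ is a nonzero ideal of the Artinian Gorenstein ring $R/aR$, so its socle lies inside the one-dimensional socle of $R/aR$; likewise $R/Q\cong\bar a\,(R/bR)$.

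That containment already finishes the proof. The double-annihilator identity you single out as the main obstacle is needed only for the sharper formulation $R/P\cong\omega_{R/P}$. Conversely, the observation that the Matlis dual of $R/P$ has a simple socle would by itself only say that $R/P$ is cyclic. What actually carries the argument is the isomorphism $R/P\cong\bar b\,(R/aR)$, which you do establish.

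Your route is structural. It shows that $R/((a):b)$ is Gorenstein for any nonzerodivisors $a,b$ with $b\notin aR$ in any one-dimensional Gorenstein local ring. It also recovers the socle generators by transport: $t^{167}\mapsto t^{237}$, the socle generator of $R/aR$ from Proposition~\ref{prop:Gorenstein}, and $t^{195}\mapsto t^{251}$ in $R/bR$. The paper's route is specific to this example, but it stays inside the monomial bookkeeping used throughout and produces the socle monomials directly.
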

\begin{proof}
Tensor the sequence $0\to P\to R\to R/P\to0$ with $R/Q$. The resulting kernel identifies
\[
 \Tor_1^R(R/P,R/Q)\cong(P\cap Q)/PQ=0
\]
by Proposition~\ref{prop:colonidentity}. The complement in $\Gamma$ of the exponent set $E$ of $P$ is
\[
 \{0,70,71,72,74,77,78,80,87,89,90,93,95,96,97,167\}.
\]
For $Q$, whose exponent set is $14+E$, the complement is
\[
 \begin{split}
 \{0,56,57,58,63,64,73,74,75,76,79,80,81,82,83\}
 &\cup[112,116]\cup[119,122]\\[-2pt]
 &\cup\{131,132,137,138,139,195\}.
 \end{split}
\]
These give the lengths. A monomial class is in the socle precisely when adding every $g\in G$ to its exponent puts it into the relevant ideal. The two finite lists show that the socles are, respectively,
\[
 k\,\overline{t^{167}}\quad\text{and}\quad k\,\overline{t^{195}}.
\]
Termwise membership also rules out additional socle classes from linear combinations. The one-dimensional socle criterion proves the last assertion.
\end{proof}

Multiplication by $t^{14}$ gives an $R$-module isomorphism $P\cong Q$, since $Q=t^{14}P$. Corollary~\ref{cor:Tor} thus realizes the configuration discussed in Huneke--Iyengar--Wiegand~\cite[Remark~5.8]{HIW}: isomorphic proper ideals whose quotients are Gorenstein and whose first Tor vanishes.

\section{Direct graded tensor verification}\label{sec:graphs}
We give a separate verification that starts from balancing relations and does not use the rigidity criterion or the inverse-ideal quotient formula. This is a monomial tensor-graph calculation of the kind considered in~\cite{Leamer}. It is not needed for the algebraic proof above.

Set $I_A=(t^{56},t^{70})A$, and identify
\[
 I_A^*=\Hom_A(I_A,A)=(A:_K I_A).
\]
Let $U=v(I_A)$ and $V=v(I_A^*)$. For each integer $d$, let $\mathcal G_d$ be the finite graph with vertices
\[
 (u,v)\in U\times V,\qquad u+v=d.
\]
Two distinct vertices are joined when, after interchanging them if necessary,
\[
 u_1-u_2=v_2-v_1\in\Gamma.
\]

\begin{lemma}\label{lem:graph}
The degree-$d$ part of $I_A\otimes_A I_A^*$ has one $k$-basis vector for each connected component of $\mathcal G_d$. Under multiplication
\[
 I_A\otimes_A I_A^*\longrightarrow I_AI_A^*,
\]
all these basis vectors map to the same monomial $t^d$. Consequently, the degree-$d$ part of the torsion submodule has dimension
\[
 \max\{0,\#\pi_0(\mathcal G_d)-1\}.
\]
\end{lemma}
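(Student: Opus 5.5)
We will compute $I_A\otimes_A I_A^*$ degree by degree, using the fact that both factors are monomial ($\ZZ$-graded) $A$-modules whose homogeneous components are at most one-dimensional.

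\emph{Step 1: a graded presentation.} Since $A=\bigoplus_{n\in\Gamma}k\,t^n$, $I_A=\bigoplus_{u\in U}k\,t^u$ and $I_A^*=\bigoplus_{v\in V}k\,t^v$, we apply the standard presentation of a tensor product over $A$. The module $I_A\otimes_k I_A^*$ has $k$-basis $t^u\otimes t^v$. The module $I_A\otimes_A I_A^*$ is its quotient by the $k$-span of the balancing relations $t^nt^{u}\otimes t^{v}-t^{u}\otimes t^nt^{v}$, where $n\in\Gamma$ and $u+n\in U$, $v+n\in V$. (Here we also allow $u\notin U$ or $v\notin V$ as long as $u\in U$ and $v\in V$ for the actual factors; we will be careful to use $t^u\in I_A$ and $t^v\in I_A^*$.) Everything is homogeneous, so in degree $d$ the component is $k^{\mathcal V_d}$ modulo the span of the vectors $e_{(u_1,v_1)}-e_{(u_2,v_2)}$. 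Here $\mathcal V_d=\{(u,v)\in U\times V:u+v=d\}$, and the relation arises from $u_1=u_2+n$, $v_2=v_1+n$, with $(u_2,v_1)$ being a pair with $t^{u_2}\in I_A$ and $t^{v_1}\in I_A^*$. Because $U$ and $V$ are closed under $\Gamma$-translation, such a relation exists exactly when $u_1-u_2=v_2-v_1\in\Gamma$ with all four exponents in the right sets. Because $u_2\in U$ and $v_1\in V$ are given by the vertices themselves, this is precisely the edge condition of $\mathcal G_d$.

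\emph{Step 2: linear algebra of graphs.} The quotient of $k^{\text{vertices}}$ by the span of $e_x-e_y$ over all edges $xy$ has dimension equal to the number of connected components, with one basis vector per component (the class of any vertex in it). This gives the first claim. The set $\mathcal V_d$ is finite because $U$ and $V$ are bounded below, so $\mathcal G_d$ is a finite graph.

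\emph{Step 3: the multiplication map and torsion.} The multiplication map sends $t^u\otimes t^v$ to $t^{u+v}=t^d$, so every component class maps to $t^d$. If $d\in U+V$, then $t^d\neq0$ in $I_AI_A^*\subset K$, and in degree $d$ the kernel has dimension $\#\pi_0(\mathcal G_d)-1$. If $\mathcal G_d$ is empty, both sides are zero. The kernel of multiplication is exactly the torsion submodule, by the argument in the proof of Lemma~\ref{lem:Leamer}: the map is an isomorphism after tensoring with $K$, and the target is torsion-free. It is graded because the map is graded. This yields the stated dimension count $\max\{0,\#\pi_0-1\}$.

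The main obstacle is Step 1: we must justify rigorously that the relation module is spanned by the monomial balancing relations with the vertex conditions exactly as in the edge definition. We would handle this by writing $I_A\otimes_A I_A^*$ as $F/N$, where $F=I_A\otimes_k I_A^*$ and $N$ is spanned by $(at)\otimes s-t\otimes(as)$, and then reducing to monomial $a$, $t$, $s$ by bilinearity.
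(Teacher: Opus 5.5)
Your proposal is correct and follows the paper's own proof. You use monomial bases for $I_A$ and $I_A^*$, and reduce the tensor relations by trilinearity to the balancing relations $t^{u+g}\otimes t^v=t^u\otimes t^{v+g}$, which are exactly the edges of $\mathcal G_d$; each component then gives one basis vector mapping to $t^d$, and the kernel equals the torsion because multiplication is an isomorphism over $K$ with torsion-free target. The one flaw is the garbled parenthetical in Step 1, which you can simply delete: the relations only require $t^u\in I_A$, $t^v\in I_A^*$ and $n\in\Gamma$, and then $u+n\in U$ and $v+n\in V$ hold automatically by $\Gamma$-closure.
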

\begin{proof}
The monomials with exponents in $U$ and $V$ are bases of the two fractional ideals. In degree $d$, tensoring over $A$ imposes exactly the relations
\[
 t^{u+g}\otimes t^v=t^u\otimes t^{v+g}\qquad(g\in\Gamma).
\]
These identify vertices precisely within connected components. The resulting quotient has one basis vector per component over every field, and multiplication sends each to $t^d$. Its kernel is exactly torsion: after extending scalars to $K$ the map is an isomorphism, and its target is a submodule of $K$ and hence torsion-free.
\end{proof}

The calculation is finite for a proved reason, not an experimental cutoff. Here
\[
 a_0=\min U=56,\qquad b_0=\min V=0,\qquad c=182.
\]
Indeed $0\in V$. If $x<0$ belonged to $V$, then $x+56\in\Gamma\cap[0,55]=\{0\}$, so $x=-56$, but then $x+70=14\notin\Gamma$.

\begin{lemma}[Uniform tail bound]\label{lem:tail}
Every nonempty graph $\mathcal G_d$ is connected for
\[
 d\geq a_0+b_0+4c=784.
\]
\end{lemma}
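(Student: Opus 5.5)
Fix $d\geq 784$ with $\mathcal G_d$ nonempty. The plan is to link every vertex $(u,v)$ to one canonical vertex. Because $u\geq a_0=56$ and $v\geq b_0=0$ while $u+v=d\geq 784=a_0+b_0+4c$, at least one coordinate is large. The only facts needed about $U$ and $V$ are that they are closed under addition of $\Gamma$ and that $[c,\infty)\subseteq\Gamma$ with $c=182$. Hence $U\supseteq a_0+\Gamma\supseteq[a_0+c,\infty)$, and likewise $V\supseteq[b_0+c,\infty)$, using $0\in V$. The canonical vertex will be $(a_0,d-a_0)$. It lies in $\mathcal G_d$ because $a_0\in U$ and $d-a_0\geq b_0+3c$, so $d-a_0\in V$.

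First take a vertex with $u\geq a_0+c$. Then $u-a_0\geq c$ lies in $\Gamma$, and $(u,v)$ is joined directly to $(a_0,d-a_0)$. The edge condition reads $u-a_0=(d-a_0)-v\in\Gamma$, and this holds since both sides equal $u-a_0$.

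Next take a vertex with $u<a_0+c$. Then $v=d-u>d-a_0-c\geq b_0+3c$, so $v$ is large. Choose an intermediate vertex $(u',v')$ with $u'=u+c$ and $v'=v-c$. Here $u'\in U$ because $c\in\Gamma$, and $v'\geq b_0+2c\geq b_0+c$, so $v'\in V$. The edge from $(u,v)$ to $(u',v')$ is valid because $u'-u=c=v-v'\in\Gamma$. Since $u'\geq a_0+c$, the previous case joins $(u',v')$ to the canonical vertex.

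Every vertex is therefore within distance two of $(a_0,d-a_0)$, so $\mathcal G_d$ is connected. I expect the only delicate point to be the bookkeeping that each intermediate coordinate stays in $U$ or $V$. This uses only $\Gamma$-closure together with the bound $d-a_0-c\geq b_0+c$, and $4c$ is comfortably more than this argument needs. I would also note explicitly that $b_0=0$ and that $0\in V$, both of which the paper established just before the lemma.
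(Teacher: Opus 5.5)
Your proof is correct and is essentially the paper's hub argument. The paper attaches vertices with $u\geq a_0+2c$ to $H_1=(a_0+c,d-a_0-c)$ and vertices with $v\geq b_0+2c$ to $H_2=(d-b_0-c,b_0+c)$, then joins the two hubs. You instead use the single hub $(a_0,d-a_0)$ and route vertices with $u<a_0+c$ through the shift $(u+c,v-c)$; this variant in fact already works for $d\geq a_0+b_0+3c-1$.

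One small slip is in your closing remark. The intermediate step needs $v-c\geq b_0+c$ for every $v\geq d-a_0-c+1$, that is, $d-a_0-c\geq b_0+2c-1$, not merely $d-a_0-c\geq b_0+c$. The body of your argument uses the correct inequality.
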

\begin{proof}
The sets $U,V$ are relative $\Gamma$-ideals. Every integer at least $a_0+c$ belongs to $U$, and every integer at least $b_0+c$ belongs to $V$. For $d\geq a_0+b_0+4c$, use the vertices
\[
 H_1=(a_0+c,d-a_0-c),\qquad H_2=(d-b_0-c,b_0+c).
\]
For any vertex $(u,v)$ of degree $d$, either $u\geq a_0+2c$ or $v\geq b_0+2c$. In the first case $u-(a_0+c)\geq c$ is in $\Gamma$, giving an edge to $H_1$; in the second case there is an edge to $H_2$. The difference between the first coordinates of the hubs is
\[
 d-a_0-b_0-2c\geq2c\geq c,
\]
so it also belongs to $\Gamma$, and the hubs are joined. Thus the graph is connected.
\end{proof}

The ancillary program \texttt{verify\_tensor\_graph.py} computes every graph in the remaining range $56\leq d<784$ using independently generated, exact semigroup membership data. It finds that every nonempty graph is connected. Lemmas~\ref{lem:graph} and~\ref{lem:tail} give a direct computer-assisted verification over $A$, and localization recovers the assertion over $R$. No matrix-rank calculation or characteristic-dependent operation is used.

\section{Comparison with related work}\label{sec:comparison}
Christensen, Gerko, and Iyengar~\cite[Propositions~2.2 and~3.2]{CGI} construct rigid ideals using subspaces of a finite field extension. Their Example~3.3 uses a degree-$24$ extension of $\QQ$ and a standard graded domain with an ideal generated by two linear forms. Both constructions use the colon criterion from~\cite{HIW}; their underlying rings and finite certificates differ.

Table~\ref{tab:comparison} compares the localizations at the homogeneous maximal ideals. The invariants in its middle column were proved above. For the last column, the graded dimensions in~\cite[Example~3.3]{CGI} are $1,12,23,24,24,\ldots$, yielding Hilbert series
\[
 \frac{1+11z+11z^2+z^3}{1-z},
\]
embedding dimension $12$, and multiplicity $24$. Their evaluation map has image equal to the maximal ideal. The different embedding dimensions rule out an isomorphism between these local rings.

\begin{table}[htbp]
\centering
\small
\begin{tabularx}{\textwidth}{@{}>{\raggedright\arraybackslash}Xcc@{}}
\toprule
 & Present example & \cite[Example~3.3]{CGI}\\
\midrule
Ground field & Any field $k$ & $\QQ$\\
Embedding dimension & $26$ & $12$\\
Hilbert--Samuel multiplicity & $56$ & $24$\\
Minimal generators of the ideal & $2$ & $2$\\
Colength of its trace ideal & $3$ & $1$\\
\bottomrule
\end{tabularx}
\caption{Numerical invariants of the two local examples.}\label{tab:comparison}
\end{table}

Santiba\~nez-Leal~\cite{FSL} reports computational Frobenius minimality and minimum-layer uniqueness for this example, within the class of two-generated monomial ideals over symmetric numerical semigroup rings, as well as a separate infinite family and further structural results. None of those minimality or classification results is needed here. Our theorem is an explicit existence statement and does not claim minimality among arbitrary one-dimensional Gorenstein domains or modules.

The complete-intersection setting is different. Garc\'ia-S\'anchez and Leamer~\cite{GL} prove the conjecture for two-generated monomial ideals over complete-intersection numerical semigroup rings. Thus our semigroup ring is not a complete intersection. The present construction disproves the general Gorenstein formulation, not its restriction to complete intersections. Likewise, vanishing of $\Ext_R^1(I,I)$ alone does not establish vanishing of every higher self-extension; no counterexample to an all-degrees self-extension conjecture is asserted.

\section{Exact verification and provenance}\label{sec:verification}
The source distribution includes two standalone Python programs in its \texttt{anc} directory. Both use only the standard library and integer arithmetic.

\pagebreak
The program \texttt{verify\_counterexample.py} starts from $G$ and computes the Ap\'ery vector of $\Gamma$ modulo $56$ by Dijkstra's algorithm. A relative $\Gamma$-ideal means a nonempty subset $H\subseteq\ZZ$ bounded below with $H+\Gamma\subseteq H$. Its relative Ap\'ery vector records the least member in each residue class modulo $56$. Membership is then exact in every degree:
\[
 n\in H\quad\Longleftrightarrow\quad n\geq h_{n\bmod56}.
\]
Intersections and unions correspond to coordinatewise maxima and minima. For a translate and a Minkowski sum, the corresponding formulas are
\[
 (h+s)_r=s+h_{r-s},\qquad
 (h\mathbin{+}h')_r=\min_{i\bmod56}(h_i+h'_{r-i}),
\]
where subscripts are reduced modulo $56$. Equality of such vectors therefore checks an infinite ideal identity, rather than membership up to a guessed cutoff.

The program verifies the generator minimality, the entire gap and membership lists, symmetry, the finite addition identities, the colon and inverse-ideal identities, and every support and numerical invariant in Section~\ref{sec:structure} and Corollary~\ref{cor:Tor}. It also compares its membership oracle with a separately computed dynamic-programming table.

The program \texttt{verify\_tensor\_graph.py} does not import the Ap\'ery implementation. It builds semigroup membership by dynamic programming and checks all tensor graphs below the proved bound. It additionally checks two small examples that do have torsion and a principal-ideal example, to test both outcomes of the graph calculation. Reproduce the checks from the source-package root with
\begin{verbatim}
python3 anc/verify_counterexample.py
python3 anc/verify_tensor_graph.py
\end{verbatim}
Deterministic expected outputs are supplied alongside the scripts. These calculations certify the explicit arithmetic; the algebraic identifications and the infinite tail bound are proved in the paper.

The original example was found in an AI-assisted search directed by the author. GPT-5.6 Pro assisted with the search, early drafting, and initial verification code. ChatGPT also assisted with the preparation and exact checks of this version. After the example was circulated, Craig Huneke independently recomputed its semigroup and colon-ideal data and verified the rigidity conclusion; his note is available in the companion repository~\cite{HunekeNote,PhamRepo}. The human author assumes responsibility for the statements of the paper.

The repository reference is pinned to an existing commit for the original verification materials. The verifiers for the present version are included directly with the arXiv source and do not require network access or any repository checkout.

\section*{Acknowledgments}
The author thanks Craig Huneke for independently verifying the example and preparing a separate verification note, and Srikanth B. Iyengar for helpful correspondence and for drawing attention to related work. The author also thanks the researchers whose work on rigid ideals and numerical semigroup rings supplies the algebraic framework used here.

\clearpage
\appendix
\section{Complete pair-sum certificate}\label{app:pairs}
The set $B$ has $12$ elements and $C$ has $49$ elements. The table below gives a decomposition of every element of $C$ as a sum of two members of $B$. Conversely, enumerating the $78$ unordered pairs with repetition from $B$ produces no value outside $C$.
\begin{center}
\small
\renewcommand{\arraystretch}{1.15}
\begin{tabular}{@{}lllll@{}}
$112=56+56$ & $113=56+57$ & $114=56+58$ & $115=57+58$ & $116=58+58$\\
$119=56+63$ & $120=56+64$ & $121=57+64$ & $122=58+64$ & $126=63+63$\\
$127=63+64$ & $128=64+64$ & $129=56+73$ & $130=57+73$ & $131=56+75$\\
$132=56+76$ & $133=57+76$ & $134=58+76$ & $135=56+79$ & $136=57+79$\\
$137=56+81$ & $138=56+82$ & $139=56+83$ & $140=57+83$ & $141=58+83$\\
$142=63+79$ & $143=64+79$ & $144=63+81$ & $145=63+82$ & $146=63+83$\\
$147=64+83$ & $148=73+75$ & $149=73+76$ & $150=75+75$ & $151=75+76$\\
$152=73+79$ & $154=73+81$ & $155=73+82$ & $156=73+83$ & $157=75+82$\\
$158=75+83$ & $159=76+83$ & $160=79+81$ & $161=79+82$ & $162=79+83$\\
$163=81+82$ & $164=81+83$ & $165=82+83$ & $166=83+83$ & \\
\end{tabular}
\end{center}

\end{document}